\RequirePackage[l2tabu, orthodox]{nag}
\documentclass[a4paper]{amsart}

\usepackage[T1]{fontenc}
\usepackage{microtype}
\usepackage[rm={lining=true}]{cfr-lm}

\usepackage{amssymb}
\usepackage{amsfonts,amsmath}
\usepackage[foot]{amsaddr}
\usepackage{amsthm}
\usepackage{graphicx}
\usepackage{pinlabel}
\usepackage{subcaption,booktabs}

\usepackage{hyperref}% <- MUST be loaded SECOND TO LAST.
\usepackage[capitalise,noabbrev]{cleveref} % <- MUST be loaded LAST.
\crefformat{equation}{(#2#1#3)}

\DeclareMathOperator{\tr}{tr}
\let\Re\relax
\DeclareMathOperator{\Re}{Re}
\let\Im\relax
\DeclareMathOperator{\Im}{Im}

\def\IH{{\mathbb{H}}}

\def\IH{{\mathbb{H}}}

\def\IR{{\mathbb{R}}}

\def\IZ{{\mathbb{Z}}}

\def\IC{\mathbb{C}}

\def\oC{\hat{\IC}}
\newcommand{\abs}[1]{\left\lvert#1\right\rvert}
\newcommand{\df}[1]{\textit{#1}}

\newtheorem{theorem}{Theorem}[section]
\newtheorem{lemma}[theorem]{Lemma}
\AddToHook{env/lemma/begin}{\crefalias{theorem}{lemma}}
\newtheorem{corollary}[theorem]{Corollary}
\AddToHook{env/corollary/begin}{\crefalias{theorem}{corollary}}
\theoremstyle{definition}
\newtheorem{definition}[theorem]{Definition}
\AddToHook{env/definition/begin}{\crefalias{theorem}{definition}}
\newtheorem{example}[theorem]{Example}
\AddToHook{env/example/begin}{\crefalias{theorem}{example}}
\theoremstyle{remark}
\newtheorem{remark}[theorem]{Remark}
\AddToHook{env/remark/begin}{\crefalias{theorem}{remark}}

\newcommand{\GL}{\mathsf{GL}}
\newcommand{\PSL}{\mathsf{PSL}}
\newcommand{\SL}{\mathsf{SL}}
\newcommand{\Z}{\mathbb{Z}}
\newcommand{\C}{\mathbb{C}}
\newcommand{\R}{\mathbb{R}}
\DeclareMathOperator{\Fix}{Fix}

\usepackage[backend=bibtex,style=numeric,autocite=inline, maxnames=8, isbn=false, sortcites]{biblatex}
\addbibresource{biblio.bib}

\title[Bounding deformation spaces of Kleinian groups]{Bounding deformation spaces of Kleinian groups with two generators}

\author[A. Elzenaar]{Alex Elzenaar}
\address[A.E.]{School of Mathematics,  Monash University, Melbourne, Australia.}
\author[J. Gong]{Jinhua Gong}
\address[J.G.]{Department of Mathematics, UAE University, Abu Dhabi, United Arab Emirates.}
\author[G.J. Martin]{Gaven J. Martin}
\address[G.J.M.]{Institute for Advanced Study,  Massey University, Auckland, New Zealand.}
\author[J. Schillewaert]{Jeroen Schillewaert}
\address[J.S.]{Department of Mathematics, The University of Auckland, New Zealand.}
\thanks{Work of authors partially supported by the New Zealand Marsden Fund. AE was supported by an Australian Government Research Training Program (RTP) Scholarship during part of the period that this work was undertaken.}

\keywords{Kleinian groups, Teichm\"uller space, hyperbolic geometry, quasiconformal deformation spaces, quantum deformations}
\subjclass[2020]{32G15, 30F40, 20F65, 57K32, 05A30}

\begin{document}

\begin{abstract}  In this article we provide simple and provable bounds on the size and shape of the locus of discrete subgroups of $\PSL(2,\IC)\cong \mathrm{Isom}^+(\IH^3)$ which split as a free product of cyclic groups $\IZ_p*\IZ_q$, $3\leq p,q \leq \infty$. These bounds are sharp and meet the highly fractal boundary of the deformation space in four cusp groups. Such bounds have great utility in computer assisted searches for extremal Kleinian groups so as to identify universal constraints (volume, length spectra, etc.) on the geometry and topology of hyperbolic $3$-orbifolds. As an application, we prove a strengthened version of a conjecture by Morier-Genoud, Ovsienko, and Veselov, motivated by the theory of quantum rational numbers, on the faithfulness of the specialised Burau representation.
\end{abstract}

\maketitle

\section{Introduction}
Let $ 3 \leq p,q \leq \infty $. The $\PSL(2,\IC) $ character variety of $ \IZ_p * \IZ_q $ is a one-complex dimensional space. Picking a particular normalisation for the representations, depending on
a single complex parameter $ \rho $, in \cref{defn:omega_bounds} we give an open region $\Omega_{p,q} \subset \C $ bounded by 12 lines such that if $ \rho \not\in \Omega_{p,q} $ then the corresponding
representation is faithful and discrete. More precisely, our main result is the following:
\begin{theorem}\label{thm:main}
  Let $X,Y\in \SL(2,\IC)$ be primitive elliptic elements of order $p$ and $q$, where $3\leq p\leq q$. Let $\rho$ be a solution of
  \begin{displaymath}
    \rho  \left( \rho-4\sin \frac{\pi}{p}\sin\frac{\pi}{q} \right) = \tr\,[X,Y]-2.
  \end{displaymath}
  If $\rho\not\in \Omega_{p,q}$, then $\langle X,Y\rangle\cong \langle X\rangle*\langle Y\rangle$ and the group is discrete.  This result is sharp at four cusp groups lying on the boundary of $\Omega_{p,q}$.
\end{theorem}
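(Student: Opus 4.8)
The plan is to prove \cref{thm:main} by an explicit ping-pong (Klein--Maskit combination) argument in the style of Lyndon--Ullman, adapted to the elliptic cyclic factors $\langle X\rangle\cong\IZ_p$, $\langle Y\rangle\cong\IZ_q$ and to the polygonal region $\Omega_{p,q}$. First I would normalise: we may assume $\langle X,Y\rangle$ is irreducible, the reducible case $\gamma=0$ (that is, $\rho\in\{0,\,4\sin\tfrac\pi p\sin\tfrac\pi q\}$) being excluded since both of these points lie in $\Omega_{p,q}$. Then by the trace-coordinate result recalled above \cite{GMSE} we may replace $(X,Y)$ by a conjugate pair $(A,B_\rho)$ with $\alpha=e^{i\pi/p}$ and $\beta=e^{i\pi/q}$; since $(\alpha-\alpha^{-1})(\beta-\beta^{-1})=-4\sin\tfrac\pi p\sin\tfrac\pi q$, the given $\rho$ is exactly a root of $\rho(\rho-4\sin\tfrac\pi p\sin\tfrac\pi q)=\tr[X,Y]-2$. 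I would then invoke the three available symmetries --- complex conjugation, the reflection $\rho\mapsto 4\sin\tfrac\pi p\sin\tfrac\pi q-\rho$ of \cref{sym} (which is precisely the ``symmetry'' used to define $\Omega_{p,q}$), and the interchange $p\leftrightarrow q$, which permutes the twelve lines among themselves --- to reduce to showing discreteness and freeness when $\rho\notin\Omega_{p,q}$ lies in a single fundamental sector, say $\{\,\Im\rho\geq 0,\ \Re\rho\geq 2\sin\tfrac\pi p\sin\tfrac\pi q\,\}$.

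The heart of the argument is to produce, for each such $\rho$, a Klein--Maskit combination of $\langle A\rangle$ and $\langle B_\rho\rangle$: two disjoint closed regions $\mathcal U\ni\mathrm{fix}(B_\rho)$ and $\mathcal V\ni\mathrm{fix}(A)$ in $\oC$, with $\mathcal U\cup\mathcal V\neq\oC$ and each bounded by arcs of the twelve lines of the statement (in the fundamental sector only about four of them are active, the remainder being handled by the symmetries), such that $A^{k}(\oC\setminus\mathcal V)\subseteq\mathcal V$ for $1\leq k\leq p-1$ and $B_\rho^{k}(\oC\setminus\mathcal U)\subseteq\mathcal U$ for $1\leq k\leq q-1$. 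Here one uses that $A$ is the affine rotation $z\mapsto\alpha^{2}(z-c_A)+c_A$ about $c_A=\tfrac i2\csc\tfrac\pi p$, so the inclusion for $A$ reduces to the $p$ rotates of a disc (or sector) about $c_A$ being pairwise disjoint, while the nontrivial powers of $B_\rho$ have isometric circles of the easily computed radii $\sin(\pi/q)/(|\rho|\,|\sin(k\pi/q)|)$ --- and the conjugates $A^{k}B_\rho A^{-k}$ have isometric circles of radius $1/|\rho|$, since their lower-left entry is $\alpha^{-2k}\rho$ --- so the inclusion for $B_\rho$ reduces to disjointness of a handful of round circles. Each such condition simplifies, via the relevant trigonometric identities, to an elementary inequality saying that $\rho$ lies on the correct side of one of the twelve lines; the constant $\xi=\sqrt2\sqrt{7-\cos(2\pi/p)}$ and the points $\rho^{*}_{p,q}$, $x_{p,q}$ enter as the solutions of the corresponding tangency (discriminant-vanishing) equations. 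Granting the inclusions, the ping-pong lemma gives $\langle X,Y\rangle=\langle A\rangle*\langle B_\rho\rangle\cong\IZ_p*\IZ_q$, and thickening $\mathcal U,\mathcal V$ to regions in $\overline{\IH^{3}}$ (equivalently, applying Maskit's combination theorem directly) upgrades this to discreteness in $\PSL(2,\IC)$.

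I expect the delicate point to be choosing $\mathcal U,\mathcal V$ so that the finitely many inclusion inequalities collapse to \emph{exactly} the twelve lines --- a crude choice, e.g.\ the literal isometric discs of $B_\rho$, would only yield a strictly larger, non-sharp exclusion region --- and so that the configuration degenerates to a genuine tangency at precisely four boundary points of $\Omega_{p,q}$. For the sharpness claim I would then identify those four points (one representative being the group $\Gamma^{\alpha,\beta}_{\rho^{*}_{p,q}}$; when $p=q$ the four are $\rho^{*}_{p,p}$, its complex conjugate, and the \cref{sym}-reflections of these) and show each is a cusp group on $\partial\overline{\cal R}_{p,q}$ by exhibiting the Farey word of the appropriate slope $\pm\tfrac1n$ that is accidentally parabolic there; this last verification uses the elliptic Keen--Series theory of \cite{EMS3}. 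Since $\Omega_{p,q}$ then cannot be shrunk near these four groups, the bound of \cref{thm:main} is optimal.
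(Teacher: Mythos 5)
Your normalisation and the general Klein--Maskit framework match the paper's starting point, but the central mechanism you propose --- a single interactive pair $(\mathcal U,\mathcal V)$ for the fixed pair $(A,B_\rho)$ whose finitely many disjointness conditions ``collapse to exactly the twelve lines'' --- does not work, and the paper's proof is built precisely around the two ideas your sketch omits. First, disjointness of isometric circles only ever yields \emph{disk} conditions on $\rho$ (this is \cref{lem2}: four excluded disks of radius $2$). The oblique half-plane bounds in items (3) and (4) are not obtained from any one configuration; they arise as an \emph{envelope over a one-parameter family of conjugates}. The key point (\cref{cip}, \cref{thm4}, \cref{abconic}) is that the sector $\mathbf{K}$ can be rescaled by $z\mapsto z/(1-it)$ so that its $A$-translates form a stellation of a $p$-gon, whence if $\rho_0$ admits a conic interactive pair then every group on the whole line $\{\rho_0(1+it):t\in\IR\}$ is discrete and free; applying this at the corner points $\rho^*_{p,q}$ and $\rho^*_{q,p}$ of the union of the four excluded disks produces the oblique lines (\cref{Vpq}). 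Without this deformation step there is no way to pass from circle conditions to line conditions.

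Second, the horizontal lines in item (2) provably \emph{cannot} be reached by your scheme: the sharp bound $\abs{\Im\rho}=2\sqrt{1-\sin^2\frac{\pi}{p}\sin^2\frac{\pi}{q}}$ is attained at the $\pm\frac12$ cusp groups, and \cref{thmx} shows that $\langle A,B_{\rho_{1/2}}\rangle$ admits \emph{no} conic interactive pair in the $\rho$-normalisation (the line through $\rho_{1/2}$ in direction $i\rho_{1/2}$ meets the real axis at $4\sin\frac{\pi}{p}\sin\frac{\pi}{q}<4$, which is impossible for groups covered by \cref{abconic}). The paper obtains item (2) from an entirely different normalisation: the symmetric $\lambda$-coordinates with generators $U,V$, the condition that the isometric disks of $U$ lie inside the intersection of those of $V$ (\cref{ineq}), conjugation back by an explicit involution, and an application of \cref{thm4} along a direction $c_\lambda$ that is purely imaginary, yielding \cref{lem:main_res_part_2}. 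Finally, your identification of the four sharp cusp groups is incorrect: $\rho^*_{p,q}$ is merely a corner of the union of excluded disks, not a cusp group; the four cusps at which $\partial\Omega_{p,q}$ touches $\partial\overline{\mathcal R}_{p,q}$ are the $\frac01$ and $\frac11$ cusps (the $(p,q,\infty)$-triangle group and its \cref{sym}-image, on the lines of item (1)) and the $\pm\frac12$ cusps (on the lines of item (2)).
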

The majority of this theorem will be proved as \cref{Vpq}, with one final bound being proved in \cref{lem:main_res_part_2}.

Our methods rely on the complex analytic theory of M\"obius transformations. The interior of the locus of discrete groups isomorphic to $\IZ_p*\IZ_q$, denoted $\mathcal{R}_{p,q}$, is the quasiconformal deformation space
of the representations within it \autocite{EMS1}. As a complex manifold, it is the quotient by a $\IZ$ action (the twist group) of the Teichm\"uller space of the marked Riemann surface supported on the Riemann sphere $\oC$
with four marked points corresponding to two cone points of order $ p $ and two of order $q$; but as a subset of the character variety it has a highly complicated fractal boundary. We are able, through careful
geometric analysis, to produce bounds on this complicated subset of $ \IC $ using subtle but entirely classical Euclidean and conformal arguments. Increasingly precise
bounds on $\mathcal{R}_{\infty,\infty}$ were provided from the late 1940s \autocite{Sanov,Brenner,CJR} to the late 1960s \autocite{LU} before the connections to 3-manifold topology and knot theory were
understood. Many of these ideas, particularly those in \autocite{LU}, underpin our work here. The common thread between all of these papers is their reliance on so-called \textit{ping-pong arguments}
first introduced by Klein in order to prove that the groups within the bounds are discrete. We also use ping-pong methods, and in particular the formulation of the Klein combination theorem
given by Maskit~\cite[\S VII.A.10]{Mas}, see \cref{lem:pingpong}.

The bounds we achieve have great utility in computer assisted searches for extremal Kleinian groups so as to identify universal constraints (volume, length spectra, etc) on the geometry and topology of hyperbolic $3$-orbifolds such as those found in \autocite{M1,M2,M3,M4,M5}.   This is because the infinitely many lattices in $\PSL(2,\IC)$ generated by two elements of order $p,q$ are naturally identified with discrete groups corresponding to points in the bounded region $\IC\setminus \mathcal{R}_{p,q}$ and so determining membership in $ \mathcal{R}_{p,q} $ provides a certificate that a group is not a lattice.

As an application of our work, we conclude the paper by proving a strengthened version of a conjecture by Morier-Genoud, Ovsienko, and Veselov \autocite[Conjecture 3.2]{mgov24} on the faithfulness
of the specialised Burau representation, see \cref{cor:full_bound}. This conjecture was motivated by the theory of quantum rational numbers, and through this link the quasiconformal deformation
space of $ \PSL(2,\IZ) \simeq \Z_3 * \IZ_2 $ is related to various developments in cluster algebras, quantum calculus, knot invariants, and enumerative geometry.

In \autocite{mgov24, mgo20}, the link between the $q$-deformed modular group $ \PSL(2,\IZ)_q $ and the reduced Burau representation of $ B_3 $ is explained. They also give combinatorial
interpretations for the coefficients of elements of $ \PSL(2,\IZ)_q $ (which are polynomial in $ q $) in terms of the $q$-deformed rational numbers. One interesting interpretation which is only tangential
to the results of this paper is related to the Jones polynomial of $2$-bridge links. If $ k $ is the $ r/s $ $2$-bridge link, then construct the $ r/s$ slope-word in $\PSL(2,\IZ)_q $. Write this word as
\begin{displaymath}
  \begin{bmatrix}
  \mathcal{R}_{r/s}(q) & \mathcal{U}_{r/s}(q)\\
  \mathcal{S}_{r/s}(q) & \mathcal{V}_{r/s}(q)
  \end{bmatrix} \in \PSL(2,\IZ)_q;
\end{displaymath}es
then the Jones polynomial of $k$ is $ q\mathcal{R}_{r/s}(q) + (1-q) \mathcal{S}_{r/s}(q) $. This shows a connection between the quasiconformal deformation space $ \mathcal{R}_{p,q} $ and quantum
invariants of one of the two natural closures of braids on three strands, namely the closure which produces $2$-bridge links. There is already a strong link between $ \mathcal{R}_{\infty,\infty} $
and $2$-bridge links which goes back to work of Riley in the 1970s, which we describe in \autocite{EMS1}; there, every $2$-bridge link is assigned to a different word (which we call \df{Farey words})
in the generators $ X $ and $ Y$, and the trace polynomials (\df{Farey polynomials}) of these words control the entire structure of $ \mathcal{R}_{\infty,\infty} $. The problem of relating the
Farey polynomial of a particular $2$-bridge link (a function on $ \mathcal{R}_{\infty,\infty} $) to the Jones polynomial of the link (which is a function on $ \mathcal{R}_{2,3} $) remains open.

\section{Notation and elementary definitions}\label{tracechecks}
\begin{definition}\label{XY}
  For $\alpha,\beta,\rho\in\IC^*$ we define two matrices $A$ and $B=B_\rho$ in $\PSL(2,\IC)$ by
  \begin{displaymath}
    A = \begin{pmatrix} \alpha & 1 \\ 0 & \alpha^{-1} \end{pmatrix},\quad
    B = B_\rho = \begin{pmatrix} \beta & 0 \\ \rho & \beta^{-1} \end{pmatrix}.
  \end{displaymath}
  We will also use $A$ and $B$ to also denote the M\"obius transformations associated with these matrices.  Thus $ A(z) = \alpha^2 z + \alpha $
  and $ B(z) = \beta^2 z/(\rho\beta z + 1) $.
\end{definition}
We are mainly interested in the case that $\alpha=e^{i\pi/p}$ and $\beta=e^{i\pi/q}$ for integers $p,q\geq 3$. Often we can get away with omitting the case that either $ p $ or $ q $
is $2$, as if $B^2=I$ then $\langle A,BAB^{-1} \rangle$ has index two in $\langle A,B \rangle$ and so these groups are simultaneously discrete or not. However, the majority of our
work goes through in the case that $ p \geq 3 $ and $ q \geq 2 $.

The fixed points of $A$ and $B$ in the Riemann sphere $\oC $ are
\begin{align*}
  \Fix A &= \left\{\infty,\frac{\alpha }{1-\alpha ^2 } \right\} \quad \left[ {} =\left\{\infty,\frac{i}{2} \csc  \frac{\pi }{p}  \right\} \; \text{if $\alpha=e^{i\pi/p}$}\right] \\
  \Fix B  &= \left\{0,\frac{\beta ^2-1}{\beta  \rho } \right\}  \quad \left[ {} =\left\{0,\frac{2 i}{\rho } \sin  \frac{\pi }{q} \right\} \; \text{if $\beta=e^{i\pi/q}$}\right]
\end{align*}

Typically we lift $A$, $B$, and other matrices to $\SL(2,\IC)$ to make calculations easier. As we are interested only in questions of discreteness this will create no issues for us.

In the case that both $ A $ and $ B $ are primitive parabolics or finite-order elliptics with respective orders $ p $ and $ q $, so $ \tr A = 2\cos \frac{\pi}{p} $ and $ \tr B = 2\cos \frac{\pi}{q} $, we
define $\Gamma^{p,q}_\rho = \langle A,B \rangle $. This gives a natural family of representations $ \IZ_p * \IZ_q \to \PSL(2,\IC) $ parameterised by $ \rho $, namely the homomorphisms defined by sending
the generator of $ \IZ_p $ to $ A $ and the generator of $ \IZ_q $ to $ B $. The interior of the set of $ \rho $ such that the corresponding representations are discrete and free is a quasiconformal
deformation space of orbifold groups. For these representation spaces we introduce the notation
\begin{displaymath}
  \overline{\mathcal{R}}_{p,q} = \{\rho\in\IC:\Gamma^{p,q}_\rho \;\text{is discrete and freely generated by $A$ and $B$}\}.
\end{displaymath}
It is an elementary consequence of J\o rgensen's algebraic convergence theorem \autocite{Jorg} that $\overline{\mathcal{R}}_{p,q}$ is closed. The Riley slice \autocite{KS} has $p=q=\infty$.
Illustrations of two of these spaces are included in \cref{fig1} for $ p = q = 3 $ and $ p = q = 4 $.

\begin{remark}[Production of \cref{fig1}]
  Note that a na\"ive approach based on matrix multiplication would be computationally infeasible.
  The methods described in \autocite{EMS2,EMS3} allow us to overcome these severe computational difficulties. They are based on Farey polynomials, which are trace polynomials associated to Farey words, the latter are obtained via techniques from symbolic dynamics. We refer to these papers for more details. Similar remarks apply to \cref{fig10}.
\end{remark}

\begin{figure}
  \centering
 \includegraphics[width=.8\textwidth]{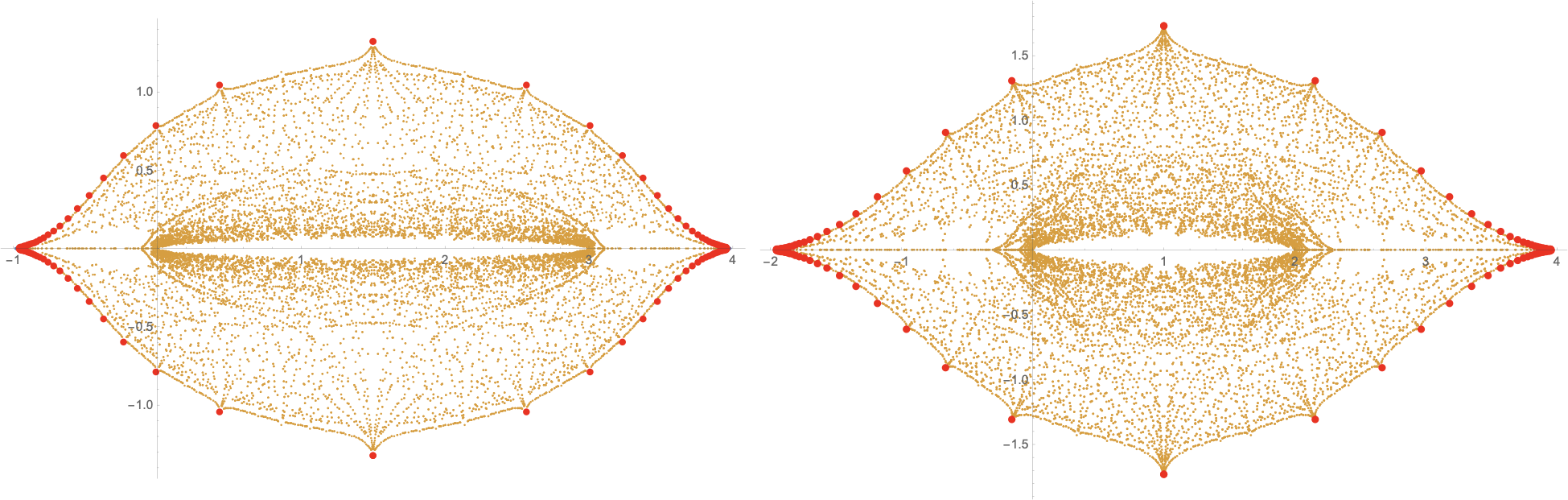}
\caption{The exterior of the illustrated regions is a concrete realisation of the quasiconformal deformation spaces $\overline{\mathcal{R}}_{3,3}$ (left) and $\overline{\mathcal{R}}_{4,4}$. The identified boundary points are certain ``cusp'' groups of slope $\pm \frac{1}{n}$.\label{fig1}}
\end{figure}

We now return to the case that $ A $ and $ B $ are arbitrary matrices of the form in Definition~\ref{XY}. We recall the following well-known parameterisation by traces \autocite[Lemma~2.2 and Remark~2.6]{GMSE}:
\begin{lemma}\label{trace_params}
  The three traces
  \begin{displaymath}
    \tr^2 A - 4,\;\tr^2 B -4,\;\text{and} \; \gamma=\tr\,[A,B]-2
  \end{displaymath}
  determine the group $\langle A,B\rangle$ uniquely up to conjugacy in $\PSL(2,\IC)$ as long as $\gamma\neq 0$. \qed
\end{lemma}
\begin{remark}
  If $\gamma=0$ then $A$ and $B$ have a common fixed point in $\oC$ and therefore $\langle A,B\rangle$ is reducible. Such discrete groups are easily classified \autocite[\S 5.1]{Beardon}.
\end{remark}
If $\langle X,Y\rangle$ is any irreducible subgroup of $\PSL(2,\IC)$, there are $\alpha$, $\beta$, and $\rho$ such that $\langle X,Y\rangle$ is conjugate to $\langle A,B_\rho \rangle$: one can take any values for which
\begin{displaymath}
  \alpha + \alpha^{-1} = \tr X,\; \beta + \beta^{-1} = \tr Y,\;\text{and}\; \rho  \left(  (\alpha-\alpha^{-1})  (\beta-\beta^{-1})+\rho \right) =\tr\, [X,Y]-2.
\end{displaymath}
By Lemma~\ref{trace_params}, the groups $\langle A,B_\rho\rangle $ and $\langle A,B_{\rho'}\rangle$ are conjugate groups if and only if $ \tr\,[A,B_\rho] = \tr\,[A,B_{\rho'}] $.
This allows us to deduce:
\begin{lemma}\label{sym}
  The set $ \overline{\mathcal{R}}_{p,q} $ is symmetric under complex conjugation (i.e.\ $ \langle A, B_{\overline{\rho}} \rangle $ is conjugate to $ \langle A, B_\rho \rangle $
  when $ \alpha = \exp(\pi i/p) $ and $ \beta = \exp(\pi i/q) $). In addition, the function
  \begin{displaymath}
    \sigma(\rho) = -\left(\alpha-\frac{1}{\alpha} \right) \left(\beta-\frac{1}{\beta}\right)-\overline{\rho} = 4 \sin \frac{\pi}{p} \sin \frac{\pi}{q} - \overline{\rho}\,,
  \end{displaymath}
  which is reflection in the vertical line $ \Re z = 2 \sin \frac{\pi}{p} \sin \frac{\pi}{q} $, is a symmetry of $ \overline{\mathcal{R}}_{p,q} $. \qed
\end{lemma}
Notice that the assertion is that the \emph{groups} are conjugate, and not that one can conjugate one generating pair to another (which is usually not the case).

In order to state our main theorem, we set up some notation. For convenience in the following, let $ \xi = \sqrt{2}\sqrt{7-\cos(2\pi/p)} $. We
now define two points in the complex plane:
\begin{align*}
\rho^*_{p,q} &= \left( \cos \frac{\pi }{p} \cos \frac{\pi }{q} +\frac{1}{2} \sin  \frac{\pi }{q}  \left(4 \sin \frac{\pi }{p} +\xi\right) \right)
 + i\left(\frac{\xi}{2}\, \cos \frac{\pi }{q} +\cos \frac{\pi }{p}  \sin \frac{\pi }{q}\right)\\
x_{p,q} &= \frac{\cos  \left[\frac{2\pi}{p}-\frac{2\pi}{q}\right]-\cos \frac{2 \pi }{p}-\cos \frac{2 \pi }{q}+\xi (\sin\frac{\pi }{p}-\sin\left[\frac{\pi}{p}-\frac{2\pi}{q}\right])+5}{ \cos \frac{\pi }{p} \cos\frac{\pi }{q}+\frac{1}{2}\sin \frac{\pi }{q}\left(4 \sin \frac{\pi }{p}+\xi\right)}.
\end{align*}

\begin{definition}\label{defn:omega_bounds}
  Use the points $ \rho^*_{p,q} $ and $ x_{p,q} $ to define a set of twelve lines:
  \begin{enumerate}
    \item the line $\left\{z:\Re z=2+2\cos\left(\frac{\pi}{p}-\frac{\pi}{q}\right)\right\}$ and its image under $ \sigma $ of \cref{sym};
    \item the two lines $\left\{z:\Im z=\pm 2  \sqrt{1-\sin ^2 \frac{\pi }{p}  \sin ^2 \frac{\pi }{q} }\right\}$;
    \item the line $\left\{z: t\rho^*_{p,q}+(1-t)x_{p,q}:t\in \IR\right\}$, its complex conjugate, and the images of both under $ \sigma $;
    \item the line $\left\{z: t\rho^*_{q,p}+(1-t)x_{q,p}:t\in \IR\right\}$, its complex conjugate, and the images of both under $ \sigma $.
  \end{enumerate}
  The set $ \Omega_{p,q} \subset \IC $ is defined to be the interior of the convex polygon bounded by these lines.
\end{definition}

In the special case $p=q$,  there are only $6$ different lines and $x_{p,p}=4$. An example is illustrated in \cref{fig2}, which shows the hexagonal
region that contains the complement of the discrete and faithful representations of $\IZ_4*\IZ_4$.

\begin{figure}
  \centering
  \labellist
  \small\hair 2pt
  \pinlabel {$\rho_{4,4}^*$} [l] at 273 216
  \pinlabel {$x_{4,4}$} [l] at 326 130
  \pinlabel {(1)} [b] at 291 253
  \pinlabel {(2)} [r] at 33 189
  \pinlabel {(3) $=$ (4)} [b] at 196 253
  \endlabellist
  \vspace{2ex}
  \includegraphics[width=0.5\textwidth]{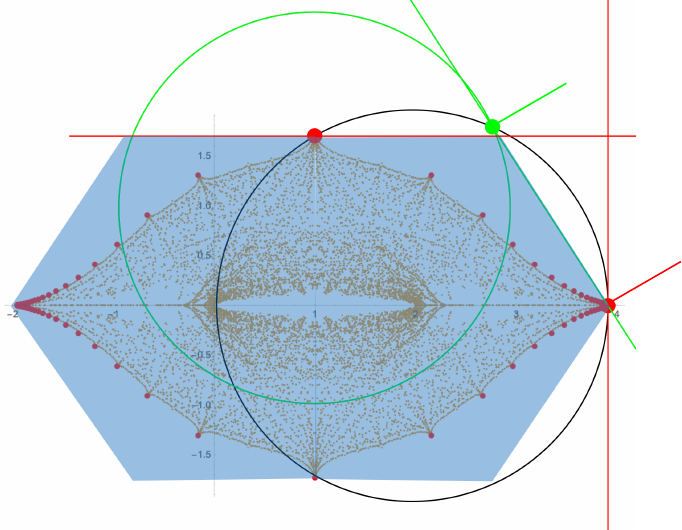}
  \caption{The shaded hexagonal region contains the exterior of the set of faithful discrete representations of $\IZ_4*\IZ_4$. It meets the boundary of the set at the
          cusp groups with slopes $\frac{0}{1} $, $\pm \frac{1}{2} $, and $\frac{1}{1}$. We also label one of each of the lines defined in (1)--(4) of \cref{defn:omega_bounds}; since $ p = q $
          the lines defined in (3) and (4) coincide.\label{fig2}}
 \end{figure}

\section{Conic interactive pairs}\label{sec:pairs}
An \df{interactive pair} $(U,V)$ for a group $G=\langle G_1,G_2 \rangle \subset \PSL(2,\IC)$ as defined by Maskit \autocite[\S VII.A.6]{Mas} is a pair of disjoint subsets $ U $ and $ V $ of $ \hat{\C} $,
both invariant under a subgroup $J<G$ (for us $J$ is trivial), such that every element of $G_1\setminus J$ maps $U$ into $V$ and every element of $G_2\setminus J$ maps $V$ into $U$. In addition, the
pair is \df{proper} if there is a point of $ U $ which is not $ G_2$-equivalent to a point of $ V $ or a point of $ V $ which is not $ G_1$-equivalent to a point of $ U $.

We will study interactive pairs of a very specific form for the groups $ \langle A, B \rangle $ where $ A $ and $ B $ are as in \cref{XY} and where $ A $ is primitive and elliptic of order $ p $.
The case that $A$ is parabolic ($p=\infty$ and $\alpha=1$) follows from taking a limit $ p \to \infty $ in the bounds we achieve for finite $p$,  so we set it aside for the moment.
Any sector of angle $2\pi/p$ with vertex at the fixed point $ \mathbf{P}=(i/2) \csc(\pi/p) \in i\IR_+ $ of $A$ is a fundamental domain for $\langle A \rangle$ acting on $\oC$.

\begin{definition}
  The sector $ \mathbf{K} $, which will be used throughout the paper, is the \emph{open} sector symmetric about the imaginary axis with vertex $\mathbf{P}$, cone angle $\frac{2\pi}{p}$,
  and which contains $0$ in its interior. We say that $\{A,B\}$ has a \df{conic interactive pair} if the pair $ (U, \mathbf{K}) $, where $ U = \hat{\C} \setminus \mathbf{K} $,
  is an interactive pair for $ G_1 = \langle B \rangle $ and $ G_2 = \langle A \rangle $. We note that this is a property of the choice of generators $ \{A,B\} $ and not a property
  of the group $ \langle A,B \rangle $.
\end{definition}

It is immediate that a conic interactive pair $ (U, \mathbf{K}) $ is a proper interactive pair, since $ \mathbf{P} $ is not moved by $ \langle A \rangle $ into $ \mathbf{K} $.
The following lemma is then a special case of \autocite[\S VII.A.10]{Mas}:
\begin{lemma}\label{lem:pingpong}
  Suppose $\{A,B\}$ has a conic interactive pair. Then $\langle A,B\rangle$ is discrete and freely generated by $A$ and $B$,  and $\{A,B^{-1}\}$ has a conic interactive pair. \qed
\end{lemma}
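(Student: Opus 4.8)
The plan is to match the data of the lemma to the hypotheses of the combination theorem of \cite[\S VII.A.10]{Mas} and read off the two conclusions. Use the dictionary $G_1=\langle B\rangle$, $G_2=\langle A\rangle$, $J=\{\id\}$, $V=\mathbf{K}$, $U=\oC\setminus\mathbf{K}$. First I would check the (easy) hypotheses: (i) $G_1$ and $G_2$ are discrete cyclic groups --- $\langle A\rangle\cong\IZ_p$ since $A$ is primitive elliptic of finite order $p\ge 3$, and $\langle B\rangle$ is cyclic hence discrete; (ii) $U$ and $V$ are disjoint subsets of $\oC$, each invariant under the trivial group $J$, and in fact each is a \emph{block} for the relevant factor, since the interactive-pair property forces $g(\mathbf{K})\cap\mathbf{K}=\emptyset$ for $g\in\langle A\rangle\setminus\{\id\}$ and $g(U)\cap U=\emptyset$ for $g\in\langle B\rangle\setminus\{\id\}$; (iii) $(U,V)$ is an interactive pair, which is exactly the hypothesis of the lemma; (iv) the pair is proper, because the apex $\mathbf{P}\in U$ is the common fixed point of $\langle A\rangle$, so its whole $\langle A\rangle$-orbit is $\{\mathbf{P}\}\subset U$ and never meets $V=\mathbf{K}$ (as already observed in the text).

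Granting these, the theorem yields both conclusions simultaneously: $\langle A,B\rangle$ is the free product $\langle A\rangle*\langle B\rangle$ --- that is, $A$ and $B$ freely generate it in the sense used throughout this paper --- and $\langle A,B\rangle$ is discrete. I would stress that the discreteness is not a formality: a free product of two discrete subgroups of $\PSL(2,\IC)$ need not be discrete, so the geometric content of the combination theorem --- it patches fundamental domains for the factors into one for $\langle A,B\rangle$, using that $\mathbf{K}$ is an (open) fundamental domain for $\langle A\rangle$ and that the interactive-pair property controls the $\langle B\rangle$-side --- is doing real work. The purely algebraic half, that the group is the free product, is the classical ping-pong lemma and needs only that $\langle A\rangle$ has order $p\ge 3$.

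For the final assertion, note that $\langle B^{-1}\rangle=\langle B\rangle$ as subgroups of $\PSL(2,\IC)$, and that neither $\mathbf{K}$ (built only from the fixed point and order of $A$) nor $U=\oC\setminus\mathbf{K}$ refers to $B$ at all. Hence ``$(U,\mathbf{K})$ is an interactive pair for $(\langle B^{-1}\rangle,\langle A\rangle)$'' is word-for-word the same statement as ``$(U,\mathbf{K})$ is an interactive pair for $(\langle B\rangle,\langle A\rangle)$'', which holds by hypothesis; so $\{A,B^{-1}\}$ has a conic interactive pair, with nothing further to prove.

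The one point deserving attention is verifying that our point-set configuration really does meet the precise technical form of \cite[\S VII.A.10]{Mas} --- in particular that $U$ being closed and $U\cup V$ exhausting $\oC$ cause no trouble, and that the interactive-pair hypothesis supplies exactly the ``precisely invariant''/block conditions that Maskit requires. These are routine checks rather than genuine obstacles, which is why the lemma can be stated as a special case of his theorem.
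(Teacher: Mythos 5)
Your proposal is correct and takes essentially the same route as the paper, which offers no written proof at all and simply cites Maskit \cite[\S VII.A.10]{Mas}: your dictionary $G_1=\langle B\rangle$, $G_2=\langle A\rangle$, $J=\{\id\}$, $(U,V)=(\oC\setminus\mathbf{K},\mathbf{K})$, the properness check via $\mathbf{P}$, and the observation that $\langle B^{-1}\rangle=\langle B\rangle$ makes the last assertion tautological are exactly the intended reading. One minor slip: ``cyclic hence discrete'' is not a valid justification for discreteness of $\langle B\rangle$ (an irrational elliptic rotation generates a non-discrete cyclic group), though this case is in fact excluded by the interactive-pair hypothesis itself, since a subsequence of powers of such a $B$ would converge to the identity and so could not map interior points of $U$ away from $\overline{\mathbf{K}}$ into $\mathbf{K}$.
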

The converse is not true, as shown in \cref{thmx} below.

We now study conjugates of groups with conic interactive pairs which also have conic interactive pairs. Our goal is to generalise work by Lyndon and Ullman, who studied the case of groups
generated by two parabolic elements, say $ \langle X,Y \rangle $; they studied in~\autocite[Theorem 3]{LU} the possible scaling factors by which a fundamental strip for the parabolic $ X $ can be scaled to give a different
fundamental strip for $ X $ and thus a different tessellation of $ \IC $ that is invariant under $ X $, while preserving the property that the complement of the strip is mapped into the strip
by $ B $. In other words, they studied affine transformations which can be used to deform one ping-pong set into another. The extremal affine transformations with this property were then used
to bound the region of discreteness of $ \langle X,Y \rangle $. In our setting we will replace the infinite strip by the cone $\mathbf{K}$ and deform it by scaling to generate a tessellation of a
subset of $ \C $ which is invariant under $A$ and still has complement mapped off itself by a conjugate of $B$; just as in the work of Lyndon and Ullman, the extremal scalings will give us
bounds for the deformation space of $ \langle A,B \rangle $.

Let $\mathbf{P}\in \IC$ and ${\mathcal{P}}$ be a regular $n$-gon whose barycentre is $\mathbf{P}$. Choose an orientation on $\partial{\mathcal{P}}$, which corresponds to a consistent choice of edge for every vertex; from each vertex extend the chosen edge to an infinite ray. We call the result a \df{stellation} of ${\mathcal{P}}$.  This is illustrated in \cref{fig3} for the heptagon and square.

\begin{figure}
\centering
\includegraphics[width=0.5\textwidth]{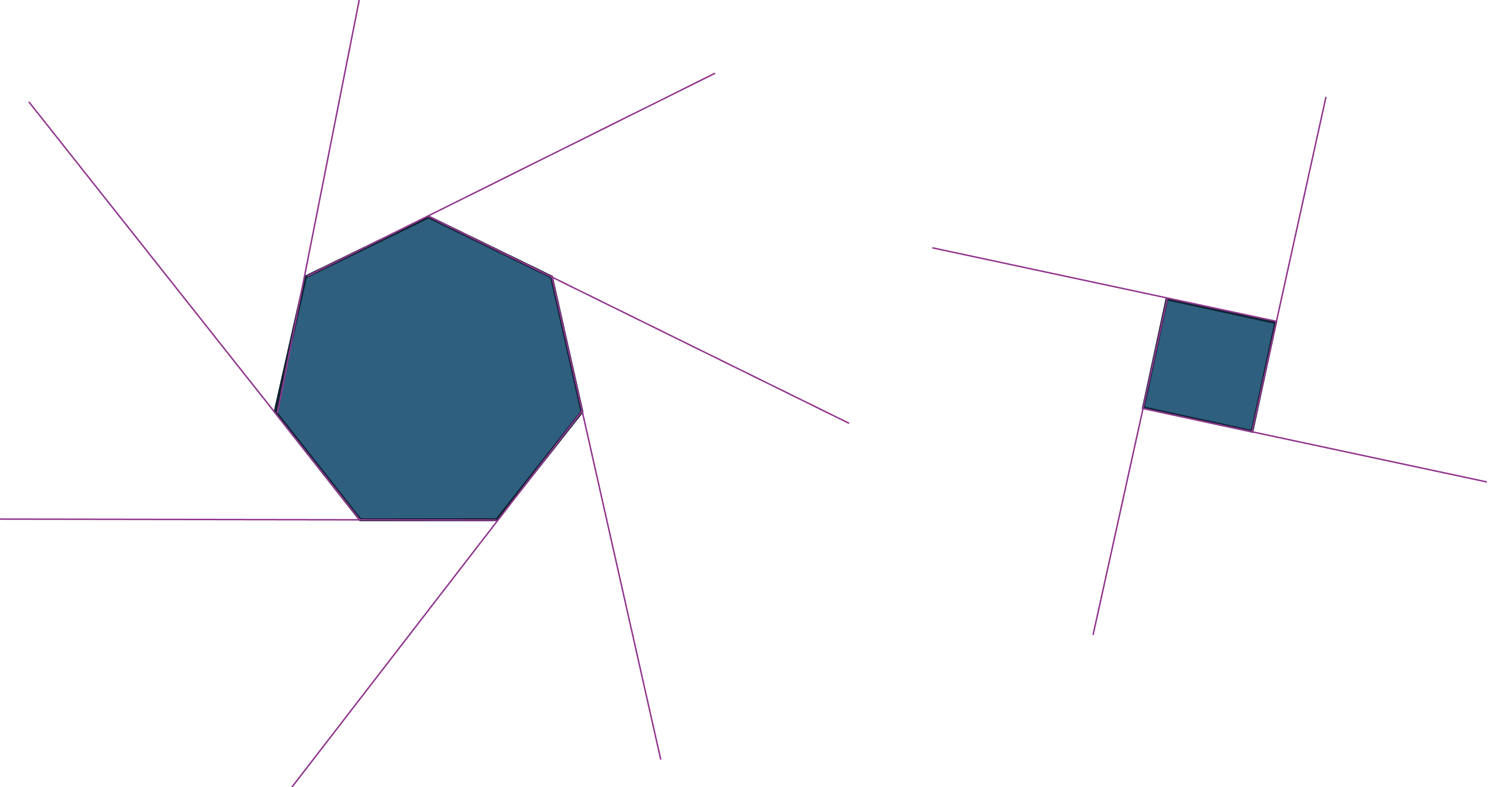}
\caption{Stellations of the regular heptagon the and square.\label{fig3}}
\end{figure}

The components of the complement of the stellation of a regular $n$-gon are sectors of angle $2\pi/n$ and are all isometric. Any rotation of order $n$ fixing the barycentre $\mathbf{P}$ of ${\mathcal{P}}$ and the point $\infty$ permutes these sectors.

\begin{theorem}\label{cip}
  Let $A$ be an elliptic transformation such that $ A $ is of order $ p $ and fixes $ \infty $, and let $ B $ be an arbitrary M\"obius transformation. Suppose that $ \{A,B\} $ admits a
  conic interactive pair with sector $\mathbf{K}$ and consider a stellation of a $p$-gon about the finite fixed point $\mathbf{P}$ of $A$. Let $\Phi$ be a M\"obius transformation such
  that $\Phi(\mathbf{K})$ lies in a component of the complement of the stellation. Then $\langle A,\Phi B \Phi^{-1} \rangle$ is discrete and freely generated by $A$ and $\Phi B \Phi^{-1}$.
\end{theorem}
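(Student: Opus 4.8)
The plan is to reduce the claim about the group $\langle A, \Phi B \Phi^{-1}\rangle$ to the existence of a conic interactive pair for the pair $\{A, \Phi B \Phi^{-1}\}$, and then invoke the lemma (the special case of Maskit's combination theorem stated above) to conclude discreteness and free generation. So the real content is: construct the conic interactive pair.

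First I would set up the candidate pair. Let $\mathbf{K}$ be the sector coming from the hypothesised conic pair $(U,\mathbf{K})$ for $\{A,B\}$, so $U = \hat{\C}\setminus\mathbf{K}$, every power of $B$ maps $U$ into $\mathbf{K}$, and every nontrivial power of $A$ maps $\mathbf{K}$ into $U$. Now consider the stellation $S$ of the regular $n$-gon $\mathcal{P}$ about $\mathbf{P}$: its complement is a disjoint union of $n$ isometric open sectors $\mathbf{K}_0,\dots,\mathbf{K}_{n-1}$ of angle $2\pi/n$, and $A$ (being a rotation of order $n$ fixing $\mathbf{P}$ and $\infty$) permutes these sectors cyclically (as noted just before the theorem). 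By hypothesis $\Phi(\mathbf{K})$ lies inside one of these sectors, say $\Phi(\mathbf{K}) \subseteq \mathbf{K}_0$. Define
\begin{displaymath}
  \mathbf{K}' \deff \bigcup_{j=0}^{n-1} A^{j}\big(\Phi(\mathbf{K})\big), \qquad U' \deff \hat{\C}\setminus \mathbf{K}'.
\end{displaymath}
Since each $A^j(\Phi(\mathbf{K}))$ sits inside the sector $A^j(\mathbf{K}_0) = \mathbf{K}_j$ and the $\mathbf{K}_j$ are pairwise disjoint, the union $\mathbf{K}'$ is a disjoint union of $n$ topological discs, $A$-invariant by construction, and $U'$ is a connected open set containing all of the stellation $S$. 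I will claim $(U', \mathbf{K}')$ is a proper interactive pair for $G_1 = \langle \Phi B \Phi^{-1}\rangle$, $G_2 = \langle A\rangle$; it is not of the ``$U = \hat\C\setminus\mathbf{K}$'' form for a single sector, but Maskit's general interactive-pair lemma still applies and, in fact, one can further check it yields a genuine conic pair for $\{A,\Phi B\Phi^{-1}\}$ once one identifies which single sector plays the role of $\mathbf K$ for that new generating set.

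The verification has two halves. \emph{(i) Powers of $A$ map $\mathbf{K}'$ into $U'$.} This is false as literally stated — $A$ permutes the pieces of $\mathbf{K}'$ — so the correct formulation (matching Maskit's definition) is that elements of $G_2\setminus J$ map $U'$ into $\mathbf{K}'$ after the roles are assigned correctly, or, reverting to the conic-pair picture, that $A^k$ for $1\le k\le n-1$ maps the single component $\Phi(\mathbf K)$ off itself into the rest of $\mathbf K'$; this is immediate because $A^k(\Phi(\mathbf K))\subseteq \mathbf K_k$ is disjoint from $\mathbf K_0\supseteq \Phi(\mathbf K)$. \emph{(ii) Every nontrivial power of $\Phi B\Phi^{-1}$ maps $U'$ into $\Phi(\mathbf K)$.} Here is the crux. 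We have $(\Phi B\Phi^{-1})^m = \Phi B^m \Phi^{-1}$. Since $U' \subseteq \hat\C\setminus \mathbf K'\subseteq \hat\C\setminus\Phi(\mathbf K)$, applying $\Phi^{-1}$ gives $\Phi^{-1}(U')\subseteq \hat\C\setminus\mathbf K = U$; then the conic-pair property of $\{A,B\}$ gives $B^m(\Phi^{-1}(U'))\subseteq \mathbf K$; applying $\Phi$ yields $(\Phi B^m\Phi^{-1})(U')\subseteq \Phi(\mathbf K)\subseteq \mathbf K_0\subseteq \mathbf K'$, hence into $\mathbf K'$ and off $U'$. Properness follows because $\mathbf P$ is fixed by $A$ and lies in the stellation $S\subseteq U'$, so it is not moved into $\mathbf K'$ by $\langle A\rangle$, exactly as in the single-sector case.

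The main obstacle — and the step I would write most carefully — is the bookkeeping in half (i): making precise which sector of the stellation is the ``$\mathbf{K}$'' of the new conic interactive pair $\{A,\Phi B\Phi^{-1}\}$, and checking that with that choice the asymmetry between ``$A$ permutes the $n$ sectors'' and ``$B$ collapses everything into one sector'' is consistent with Maskit's definition of an interactive pair (the subsets $U$ and $V$ must be \emph{disjoint}, and one generator sends $U\to V$ while the other sends $V\to U$). Concretely, I expect to take $V = \Phi(\mathbf K)$ (the distinguished sector) and $U = \hat\C\setminus\Phi(\mathbf K)$; then $\langle\Phi B\Phi^{-1}\rangle$ sends $U$ into $V$ by the computation in (ii), and $\langle A\rangle$ sends $V = \Phi(\mathbf K)\subseteq\mathbf K_0$ into $U$ because $A^k$ carries it into $\mathbf K_k$, $1\le k\le n-1$, which is disjoint from $\mathbf K_0$ hence contained in $U$. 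Once this is set up correctly, the disjointness, invariance (trivial $J$), mapping, and properness conditions are all verified by the short inclusions above, and the lemma applies verbatim to finish the proof.
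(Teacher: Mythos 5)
Your proposal is correct and, after the self-corrected detour through the union $\bigcup_j A^j(\Phi(\mathbf{K}))$, lands on exactly the paper's argument: the interactive pair is $(\hat{\C}\setminus\Phi(\mathbf{K}),\,\Phi(\mathbf{K}))$, with nontrivial powers of $A$ moving $\Phi(\mathbf{K})$ off itself because they permute the sectors of the stellation, and $\Phi B^m\Phi^{-1}$ mapping the complement into $\Phi(\mathbf{K})$ by conjugating the original conic-pair inclusion, with properness via the fixed point of $A$. The two verifications and the appeal to Maskit's combination lemma are the same as in the paper, so no further comment is needed.
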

\begin{proof}
  First observe that $ A $ permutes the components of the complement, and therefore $ A^n(\Phi(\mathbf{K})) \cap \Phi(\mathbf{K}) = \emptyset $ whenever $ A^n \neq \mathrm{Id} $. We need also to check
  \begin{displaymath}
    \Phi B^n \Phi^{-1} (\mathbb{C} \setminus \Phi(\mathbf{K})) = \Phi B^n (\mathbb{C} \setminus \mathbf{K}) \subseteq \Phi(\mathbf{K})
  \end{displaymath}
  whenever $ B^n \neq \mathrm{Id} $, and hence $ (\Phi(\mathbf{K}), \mathbb{C} \setminus \Phi(\mathbf{K})) $ gives an interactive pair (which is proper since $ \Phi(\mathbf{P}) $ is not moved
  into $\Phi(\mathbf{K})$ by the action of $ \Phi B^n \Phi^{-1} $).
\end{proof}
\begin{remark}
  In the case that $ n = 2 $ a stellation is just the complement of a line.
\end{remark}

We can apply this result to improve estimates on the discreteness locus. Suppose that we know that $ \langle A, B \rangle $ is discrete. We then make a judicious choice of $\Phi$, obtain
a new group $ \langle A, \Phi B \Phi^{-1} \rangle $ which is guaranteed to be discrete, and then compute the new value of $ \gamma $ (which is the conjugacy invariant $\tr\,[X,\Phi Y \Phi^{-1}]-2$
given in Lemma~\ref{trace_params}). If all goes well, this new value of $ \gamma $ is outside our old bounds of discreteness, and so we can enlarge those bounds.

Following Lyndon and Ullman, our first choice for $\Phi$ is multiplication by a constant and the sector is the sector $\mathbf{K}$ with apex $ \mathbf{P} $. This allows us to prove the following
corollary.

\begin{corollary} Let $ A $ be a primitive elliptic transformation with order $ p > 2 $, let $ Y \in \PSL(2,\IC) $  be  arbitrary apart from not sharing a fixed point with $A$,
and suppose that $ \{A,Y\} $ admits a conic interactive pair with sector $\mathbf{K}$. Let $S : z\mapsto \mu z$ where $\mu=1+it$ for some $t\in \IR$.
Then $\langle A,S^{-1}YS \rangle$ is discrete and freely generated by $A$ and $S^{-1} Y S$.
\end{corollary}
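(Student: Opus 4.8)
The plan is to read this off from \cref{cip} by using the scaling $S$ itself as the conjugator: set $\Phi = S^{-1}$, so that $\Phi$ is the M\"obius map $z \mapsto z/\mu$ and $\Phi B \Phi^{-1} = S^{-1}YS$ when we take $B = Y$. Of the hypotheses of \cref{cip}, all but one are immediate: $A$ is primitive elliptic of order $p$, fixes $\infty$, and has $p > 2$; $Y$ is an arbitrary M\"obius transformation; and $\{A,Y\}$ admits a conic interactive pair with sector $\mathbf{K}$. The single remaining point to check is that $\Phi(\mathbf{K}) = (1/\mu)\mathbf{K}$ lies in a component of the complement of some stellation of a regular $p$-gon centred at the finite fixed point $\mathbf{P}$ of $A$. (We may assume $t \neq 0$; otherwise $S = \id$ and there is nothing to prove.)

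First I would exhibit the stellation. Since $A$ fixes $\mathbf{P}$ and $\infty$ and is primitive of order $p$, as a M\"obius map fixing $\infty$ it is the Euclidean rotation by $2\pi/p$ about $\mathbf{P}$; hence the $\langle A\rangle$-orbit of $\mathbf{P}/\mu = S^{-1}(\mathbf{P})$ is the vertex set of a regular $p$-gon $\mathcal{P}$ with barycentre $\mathbf{P}$, which is non-degenerate because $p > 2$ and $\mathbf{P}/\mu \neq \mathbf{P}$. The sector $(1/\mu)\mathbf{K}$ has angle $2\pi/p$ and apex $\mathbf{P}/\mu$, and its $p$ translates under $\langle A\rangle$ are congruent sectors, one based at each vertex of $\mathcal{P}$. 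The key geometric input --- already supplied in the paragraph preceding the corollary and depicted in \cref{fig4} --- is that the boundary edges of these translates line up (the edge of $(1/\mu)\mathbf{K}$ through $\mathbf{P}/\mu$ extends the neighbouring translate's edge) exactly when the triangle with vertices $\mathbf{P}$, $\mathbf{P}/\mu$, $0$ has a right angle at $\mathbf{P}/\mu$, i.e. exactly when $1-\mu$ is imaginary, i.e. $\mu = 1 - it$. For such $\mu$ the translates are pairwise disjoint and their common complement is precisely a stellation of $\mathcal{P}$ (extend, at each vertex, the edge picked out by the orientation $A$ induces on $\partial\mathcal{P}$), so $(1/\mu)\mathbf{K}$ is one of the sectoral components of that complement.

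The hypothesis of \cref{cip} now holds with $B = Y$ and $\Phi = S^{-1}$, and \cref{cip} yields that $\langle A, S^{-1}YS\rangle$ is discrete and freely generated by $A$ and $S^{-1}YS$, which is the assertion. The only step where anything must actually be checked is the ``edges line up'' claim of the previous paragraph, that is the right-angle computation forcing $\mu = 1-it$, and that computation is exactly the one already carried out in the text; the rest is substitution into \cref{cip}. (I note in passing that the stated hypothesis that $Y$ shares no fixed point with $A$ is in fact redundant here: $A$ fixes $\infty$ and $\mathbf{P}$, both of which lie in $\oC \setminus \mathbf{K}$, so a common fixed point would be fixed by every power of $Y$ and hence never mapped into $\mathbf{K}$, contradicting the definition of a conic interactive pair.)
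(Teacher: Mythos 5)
Your proposal is correct and follows exactly the route the paper intends: the text preceding the corollary derives the condition $\mu=1-it$ from the right-angle criterion so that the translates of $(1/\mu)\mathbf{K}$ form the complement of a stellation, and the paper then declares the corollary ``immediate from combining this calculation with \cref{cip}'' --- precisely your substitution $\Phi=S^{-1}$, $B=Y$. Your added details (non-degeneracy of the $p$-gon for $t\neq 0$, the trivial $t=0$ case, and the observation that the shared-fixed-point hypothesis is redundant) only make the argument more explicit than the paper's.
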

\begin{proof}
For $\mu\neq 0$ let $S$ denote the loxodromic map $S:z\mapsto \mu z$. Let $A$ be primitive elliptic of order $p$ as above at  \cref{XY} with sector $\mathbf{K}$ and apex $\mathbf{P}$.  Write $\mu=\lambda e^{i\theta}$ where $\lambda \in \IR^* $ and $\theta\in \IR$. We want to calculate values for $\lambda\in (0,1]$ and $\theta$ that imply  that the translates of  $(1/\mu) K$ (the red sectors identified in \cref{fig4}) share edges. This then gives the stellation we seek.

\begin{figure}
\centering
\includegraphics[width=0.6\textwidth]{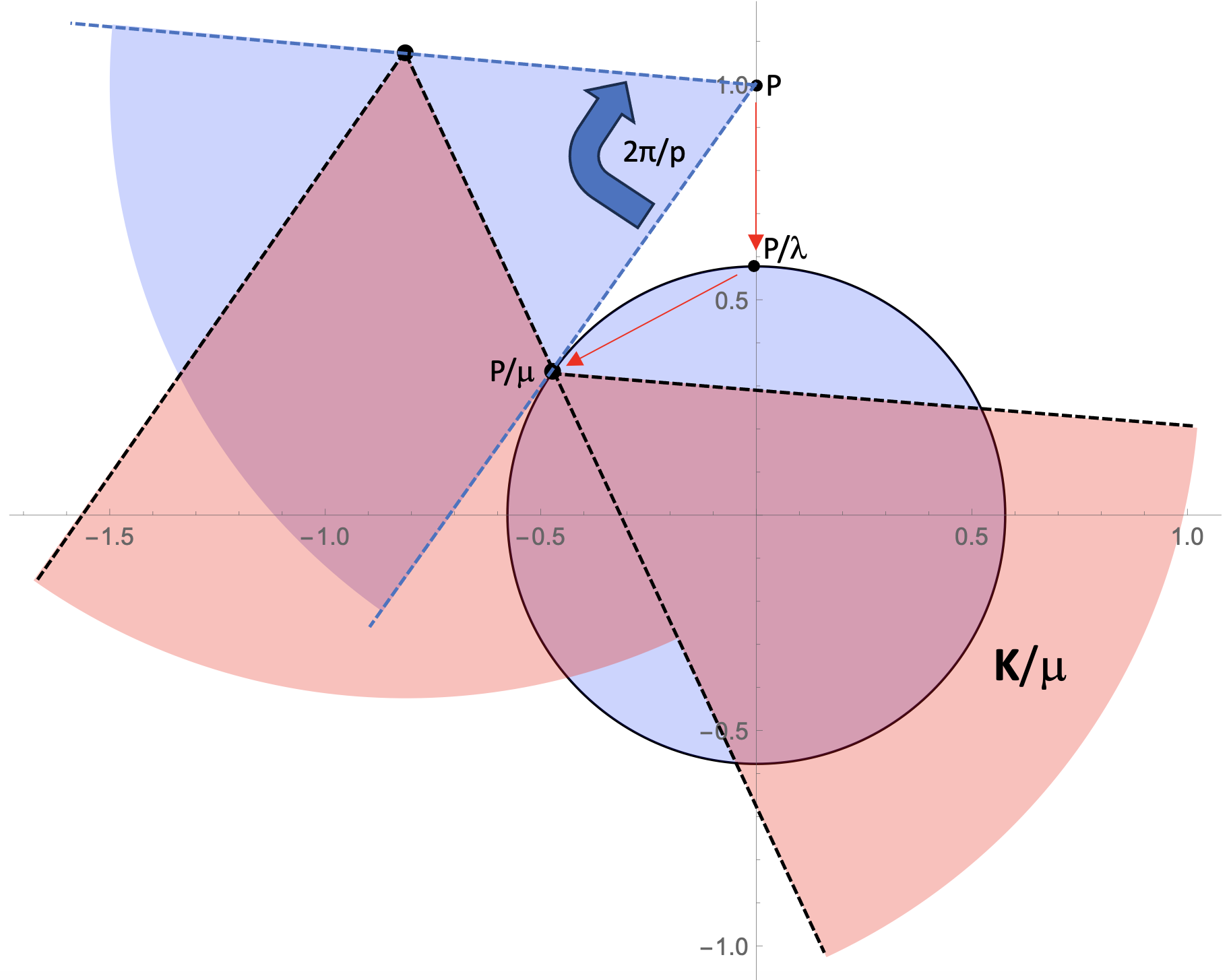}
\caption{The translates of the sector $\mathbf{K}/\mu$ line up and are disjoint. The sector with apex $\mathbf{K}/\mu$ maps onto the sector in the top left under rotation by $A$.\label{fig4}}
\end{figure}

Consideration of \cref{fig4}, along with the periodicity of $A$,  shows that a necessary and sufficient condition for these edges to coincide is that the triangle with vertices $\mathbf{P}$, $\mathbf{P}/\mu$, and $0$ has a right angle at $\mathbf{P}/\mu$.  That is, $(0-\mathbf{P}/\mu)\cdot\overline{ (\mathbf{P}-\mathbf{P}/\mu)}$ is imaginary. Then $1-\mu$ is imaginary, i.e.\ $ 1 - \mu \in \i \R $ and so
the desired condition can be written as $ \mu = 1 + i t $ for $ t \in \IR $. The result now follows from combining this calculation with \cref{cip}.
\end{proof}

\begin{theorem}\label{thm4} Let $ A $ be a primitive elliptic transformation with order $ p>2 $, let $ Y \in \PSL(2,\IC) $  be  arbitrary apart from not sharing a fixed point with $A$,
and suppose that $ \{A,Y\} $ admits a conic interactive pair with sector $\mathbf{K}$. Let $B_{\rho_0}$ be as defined at \cref{XY} with $\tr(B_{\rho_0})=\tr(Y)$ and $c =Y_{2,1}$.
For $t\in \IR$,  let $ \rho_t = \rho_0 + i c t $. Then $\langle A,B_{\rho_t} \rangle$ is discrete and freely generated by $A$ and $B_{\rho_t}$.
\end{theorem}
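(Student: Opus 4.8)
The plan is to recognise each group $\langle A,B_{\rho_t}\rangle$ as a conjugate of one of the groups already shown discrete in the preceding corollary, and then to transfer the conclusion ``discrete and freely generated'' across that conjugacy using the trace parametrisation of two-generator subgroups. First I would set $S\colon z\mapsto\mu z$ with $\mu=1-it$. Since $\{A,Y\}$ admits a conic interactive pair with sector $\mathbf K$ and $Y$ shares no fixed point with $A$, the preceding corollary gives that $\langle A,S^{-1}YS\rangle$ is discrete and freely generated by $A$ and $S^{-1}YS$. Writing $Y=\begin{pmatrix}a&b\\ c&d\end{pmatrix}\in\SL(2,\IC)$, one has $S^{-1}YS=\begin{pmatrix}a&b/\mu\\ c\mu&d\end{pmatrix}$, so the only change is in the off-diagonal entries, the lower-left becoming $c\mu$. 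It therefore suffices to show that $\langle A,B_{\rho_t}\rangle$ is conjugate to $\langle A,S^{-1}YS\rangle$.

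For this I would use the trace triple $(\tr^2 A-4,\ \tr^2(\cdot)-4,\ \tr[\,\cdot\,,\,\cdot\,]-2)$, which by \cite[Lemma 2.2 and Remark 2.6]{GMSE} determines the group up to conjugacy when the last invariant is nonzero. A short matrix computation with $A$ as in \cref{XY} gives the closed form
\begin{displaymath}
  \tr[A,Z]-2=c_Z^2+c_Z(a_Z-d_Z)(\alpha-\tfrac1\alpha)-(a_Zd_Z-1)(\alpha-\tfrac1\alpha)^2
\end{displaymath}
for $Z=\begin{pmatrix}a_Z&b_Z\\ c_Z&d_Z\end{pmatrix}\in\SL(2,\IC)$; for $Z=B_\rho$ this recovers the quadratic $\rho\big(\rho+(\alpha-\tfrac1\alpha)(\beta-\tfrac1\beta)\big)$ quoted above, and for $Z=S^{-1}YS$ it amounts to replacing $c$ by $c\mu$ (the diagonal and $ad$ are unchanged). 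Taking $\rho_0$ to be the value for which $\langle A,B_{\rho_0}\rangle$ is conjugate to $\langle A,Y\rangle$ --- equivalently a root of $\rho\big(\rho+(\alpha-\tfrac1\alpha)(\beta-\tfrac1\beta)\big)=\tr[A,Y]-2$, and concretely $\rho_0=-c-\tfrac12(\alpha-\tfrac1\alpha)\big((a-d)+(\beta-\tfrac1\beta)\big)$ with the sign of $\beta-\tfrac1\beta$ fixed by $(\beta-\tfrac1\beta)^2=(a-d)^2+4(ad-1)$ (this identity is exactly what $\beta+\tfrac1\beta=a+d=\tr Y$ forces) --- I would then substitute $\rho_t=\rho_0+ict$ into the quadratic and, using $c\mu=c-ict$, check that $\rho_t\big(\rho_t+(\alpha-\tfrac1\alpha)(\beta-\tfrac1\beta)\big)$ matches $(c\mu)^2+(c\mu)(a-d)(\alpha-\tfrac1\alpha)-(ad-1)(\alpha-\tfrac1\alpha)^2=\tr[A,S^{-1}YS]-2$ term by term. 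Since also $\tr B_{\rho_t}=\tr Y=\tr S^{-1}YS$, the two pairs have the same trace triple.

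To finish, note that $\langle A,S^{-1}YS\rangle$, being freely generated by two elements, is a free group of rank two, hence irreducible, so $\gamma=\tr[A,S^{-1}YS]-2\neq0$ (by the reducibility criterion recalled before \cref{sym}); then \cite[Lemma 2.2 and Remark 2.6]{GMSE} applies and shows $\langle A,B_{\rho_t}\rangle$ and $\langle A,S^{-1}YS\rangle$ are conjugate in $\PSL(2,\IC)$. Hence $\langle A,B_{\rho_t}\rangle$ is discrete and is abstractly a free group of rank two; being two-generated, the Hopf property of $F_2$ forces it to be freely generated by $A$ and $B_{\rho_t}$, which is the claim.

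The step I expect to be the main obstacle is the bookkeeping in the middle paragraph: deriving the closed form for $\tr[A,Z]-2$, then tracking the branch of $\sqrt{(a-d)^2+4(ad-1)}$ so as to pin down which root of the commutator quadratic is the matrix ``$B_{\rho_0}$'' intended in the statement (the other root corresponds, via the symmetry \cref{sym}, to the reflected line $\rho_0+ict$), and finally the algebraic verification that the affine substitution $\rho\mapsto\rho_0+ict$ transports the deformation $c\mapsto c\mu$ exactly. Everything else is either the already-proved corollary or the cited rigidity statement.
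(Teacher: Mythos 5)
Your proof is correct and follows essentially the same route as the paper's: conjugate by the scaling $S(z)=\mu z$, invoke the preceding corollary for discreteness, and match the trace triple $(\tr^2 A-4,\ \tr^2 B-4,\ \tr[A,B]-2)$ to identify $\langle A,B_{\rho_t}\rangle$ with $\langle A,S^{-1}YS\rangle$ up to conjugacy; your closed form for $\tr[A,Z]-2$ and the affine substitution $\rho\mapsto\rho_0+ict$ do check out. The one slip is that the group is not a free group of rank two but the free product $\langle A\rangle*\langle Y\rangle\cong\Z_p*\Z_q$ (since $A$ has order $p$); this costs nothing, as such free products are likewise non-solvable (hence irreducible, so $\gamma\neq0$) and finitely generated residually finite, hence Hopfian.
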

\begin{proof}
Let
\begin{displaymath}
  S = \begin{pmatrix} \sqrt{1+it} & 0 \\ 0 & 1/\sqrt{1+it} \end{pmatrix}.
\end{displaymath}
The group $\langle A, S^{-1}YS\rangle$ is conjugate to a group of the form $\langle A,B_{\rho_t}\rangle$ for some $t$ by the conditions provided in \cref{tracechecks}, and the former is discrete by \cref{cip} and our choice of $S$. Since $\tr\,[A, B_{\rho_0}]=\tr\,[A, Y]$, $\rho_t = \rho_0 + i c t$ implies $\tr\,[A, B_{\rho_t}] =  \tr\,[A, S^{-1}YS]$ and the proof is complete.
\end{proof}

Geometrically,  the set $\{\rho_t:t\in \IR\}$ is a line passing through $\rho_0$ which is orthogonal to the line through $0$ and $c$, since $( ict) \overline c  =   it\abs{c}^2$ is purely imaginary.

If $c=\rho_0$,  we simplify \cref{thm4} to obtain the following.
\begin{corollary}\label{abconic} Let $\{A,B_\rho\}$ (defined as at \cref{XY}, with $ p > 2 $) admit a conic interactive pair with sector $\mathbf{K}$. Then for all $t$,  $\langle A,B_{\rho (1+it)}\rangle$ is discrete and freely generated by $A$ and $B_{\rho(1+it)}$. \qed
\end{corollary}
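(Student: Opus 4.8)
The plan is to derive \cref{abconic} as the special case $c = \rho_0$ of \cref{thm4}. First I would observe that for $B_{\rho_0}$ as defined at \cref{XY}, the lower-left entry is exactly $(B_{\rho_0})_{2,1} = \rho_0$. So taking $Y = B_{\rho_0}$, which trivially satisfies $\tr Y = \tr B_{\rho_0}$, the hypothesis of \cref{thm4} is met provided $\{A, B_{\rho_0}\}$ admits a conic interactive pair with sector $\mathbf{K}$ and $A$ (of order $p > 2$) does not share a fixed point with $B_{\rho_0}$ — the latter being automatic, since if they shared a fixed point the group would be reducible and could not admit a proper interactive pair.

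Next I would invoke \cref{thm4} with this choice: it yields that $\langle A, B_{\rho_t}\rangle$ is discrete and freely generated by $A$ and $B_{\rho_t}$, where $\rho_t = \rho_0 + i c t = \rho_0 + i \rho_0 t = \rho_0(1 + it)$. As $t$ ranges over $\IR$, the parameter $\rho_t$ traces out exactly the set $\{\rho_0(1+it) : t \in \IR\}$, which is what the corollary asserts. One should note that in the corollary statement the generator is called $B_\rho$ (i.e. $\rho_0$ is renamed $\rho$), so the conclusion reads: for all $t$, $\langle A, B_{\rho(1+it)}\rangle$ is discrete and freely generated by $A$ and $B_{\rho(1+it)}$.

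There is essentially no obstacle here; the only point requiring a line of justification is the identification $c = (B_{\rho_0})_{2,1} = \rho_0$, which is immediate from \cref{XY}, together with the remark that $A$ and $B_{\rho_0}$ do not share a fixed point (forced by the existence of a \emph{proper} interactive pair, since the point $\mathbf{P}$ fixed by $A$ is not moved into $\mathbf{K}$ by $\langle B_{\rho_0}\rangle$, so in particular $\mathbf{P}$ is not fixed by $B_{\rho_0}$, and symmetrically the fixed points of $B_{\rho_0}$ lie in $U = \oC\setminus\mathbf{K}$ rather than being fixed by $A$). Thus the proof is a one-paragraph specialisation: unwind the definitions, check $c = \rho_0$, and apply \cref{thm4}.

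\begin{proof}
  Apply \cref{thm4} with $Y = B_\rho$ (so that trivially $\tr Y = \tr B_\rho$). Since $\{A, B_\rho\}$ admits a conic interactive pair with sector $\mathbf{K}$, the pair is proper, so $\mathbf{P}$ is not moved into $\mathbf{K}$ by any power of $B_\rho$; in particular $A$ and $B_\rho$ do not share a fixed point, so the hypotheses of \cref{thm4} are satisfied. From the form \cref{XY} we read off $c = (B_\rho)_{2,1} = \rho$. Hence \cref{thm4} gives that $\langle A, B_{\rho_t}\rangle$ is discrete and freely generated by $A$ and $B_{\rho_t}$, where $\rho_t = \rho + ict = \rho + i\rho t = \rho(1+it)$. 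As $t$ ranges over $\IR$ this is precisely the assertion of the corollary.
\end{proof}
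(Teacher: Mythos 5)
Your proposal is correct and matches the paper's intent exactly: the paper introduces \cref{abconic} with the single line ``If $c=\rho_0$, we simplify \cref{thm4} to obtain the following,'' which is precisely your specialisation $Y=B_\rho$, $c=(B_\rho)_{2,1}=\rho$, $\rho_t=\rho(1+it)$. Your extra remark verifying that $A$ and $B_\rho$ share no fixed point (so the hypotheses of \cref{thm4} are genuinely met) is a small but harmless addition beyond what the paper records.
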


\begin{example}\label{thmx}
  Consider the group $ \Gamma^{p,q}_{\rho} $, where
  \begin{displaymath}
    \rho = \rho_{1/2} =2 \left( \sin \frac{\pi }{p}  \sin  \frac{\pi }{q} \pm i \sqrt{\sin ^2 \frac{\pi }{p}  \sin ^2 \frac{\pi }{q} -1}\right).
  \end{displaymath}
  This group is the cusp group on the boundary of the deformation space corresponding to pinching the curve of slope $ 1/2 $ on the $4$-marked sphere;
  since the word representing this curve is the commutator $ [A,B_\rho] $ (see~\cite{EMS2}) it is obtained by choosing the correct solution to the equation $ \tr\,[A,B_\rho] = -2 $.
  As it is a cusp group, it is discrete with a circle packing limit set, and the components of its domain of discontinuity are stabilised by conjugates of $(p,q,\infty)$-triangle
  groups. One can see discreteness directly by drawing the isometric circles of $ A $ and $ B_\rho $ and doing cutting and pasting to transform them into a fundamental domain.

  The line through  $\rho_{1/2}$ in the direction of $i\rho_{1/2}$ meets the real line at $ \rho_0 = 4\sin(\pi/p) \sin(\pi/q) $ and, while $\Gamma_{\rho_{1/2}}$ is a discrete group,
  $ \Gamma_{\rho_0} $ is not (this follows from the classification of Fuchsian groups generated by two parabolics \autocite{Knapp}). Applying the contrapositive of \cref{abconic}, the
  group $\Gamma_{\rho_{1/2}}$ is discrete and conjugate to $\langle A,B_{\rho_{1/2}}\rangle$, but has no interactive conic pair coming from that generating set.
\end{example}

\section{Bounds using isometric circles}\label{sec:families}

We fix $ A $ as at \cref{XY} with $ p > 2 $ and we fix an element $ Y\in \PSL(2,\IC) $ which does not share any fixed points with $ A $.
We will identify sets of matrices $Y$ so that $\{A,Y\}$ has a conic interactive pair. In the classical theory of Kleinian groups, the first guess for an
interactive pair (or a ping-pong set in general) tends to involve the isometric discs---this goes back at least as far as Brenner~\cite{Brenner} in the current
setting, and to the classic works of Fricke and Klein in general---and in this case the most na\"ive condition one can consider is that the
isometric discs of $Y$ lie in $\mathbf{K}$. We first establish bounds using isometric disks in \cref{lem:first_bound}. We then focus on the case of two elliptic generators.
In \cref{lem2} we find necessary and sufficient conditions for the translates of $\IC\setminus \mathbf{K}$ under $B$ to be pairwise disjoint. If these conditions are fulfilled we find an interactive pair for $\{A,B\}$ with sector $\mathbf{K}$. Carefully expressing these conditions leads to estimates on $\rho$ in \cref{Vpq}.

The \df{isometric discs}  of
\begin{displaymath}
  Y = \begin{bmatrix} a&b \\ c&d \end{bmatrix}
\end{displaymath}
are the two discs
\begin{displaymath} D_1=\{z:\abs{c z+d}\leq 1\}, \quad D_2= \{z:\abs{c z-a} \leq1\},\end{displaymath}
with centres $Q_1=-d/c$ and $Q_2=a/c$ and common radius $1/\abs{c}$. These discs are important for us since they are convenient to define in terms of the matrix
entries and they have the property that
\begin{equation}\label{ICprop}
\begin{array}{ll}
  Y(D_1)=\oC\setminus \operatorname{int}(D_2),& Y^{-1}(D_2)=\oC\setminus \operatorname{int}(D_1),\\
  Y(Q_1)=\infty,& Y^{-1}(Q_2)=\infty,
\end{array}
\end{equation}
so that $Y(\oC\setminus(D_1\cup D_2)) \subset D_1\cup D_2$. For proofs of these properties of isometric circles see~\cite[\S I.C]{Mas}.

\begin{lemma}\label{meetK} There exist integers $m$ and $n$ so that  $\widetilde{Y}=A^nYA^m$ has isometric discs $D_1$ and $D_2$ whose centres both lie in $\mathbf{K}$ and whose radius is the same as that of the isometric discs of $Y$.
 \end{lemma}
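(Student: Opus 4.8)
The plan is to move the isometric disks of $Y$ into the sector $\mathbf{K}$ by applying powers of $A$ on both sides, exploiting the fact that conjugation/composition by $A$ acts by rigid rotation about $\mathbf{P}$ on the finite fixed point configuration. First I would recall that $A$ is elliptic of order $p$ fixing $\infty$ and $\mathbf{P}$, so as a M\"obius map it is conjugate to a rotation by $2\pi/p$; in particular $A^m$ maps $\oC$ isometrically (in the spherical metric, or Euclidean metric rescaled near $\mathbf{P}$) and rotates a neighbourhood of $\mathbf{P}$ by angle $2\pi m/p$. Since $\mathbf{K}$ is the open sector of angle $2\pi/p$ symmetric about the imaginary axis with apex $\mathbf{P}$ and containing $0$, the $p$ images $A^j(\mathbf{K})$, $j=0,\dots,p-1$, tile $\oC$ up to measure zero (as noted in the excerpt, any sector of angle $2\pi/p$ at $\mathbf{P}$ is a fundamental domain for $\langle A\rangle$).

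The key computation is to track what happens to the centers $Q_1 = -d/c$ and $Q_2 = a/c$ and the radius $1/|c|$ of the isometric disks under the replacement $Y \mapsto A^n Y A^m$. Writing $\widetilde Y = A^n Y A^m = \begin{pmatrix}\tilde a & \tilde b\\ \tilde c & \tilde d\end{pmatrix}$, I would check that $A^n Y A^m$, being a product of matrices in $\SL(2,\IC)$, still has determinant $1$, and then verify directly (using the upper-triangular form of $A$) that the $(2,1)$-entry satisfies $|\tilde c| = |c|$: indeed if $A = \begin{pmatrix}\alpha & 1\\0 & \alpha^{-1}\end{pmatrix}$ then left/right multiplication by $A^n$ and $A^m$ only multiplies the bottom-left entry by the appropriate power of $\alpha^{\pm 1}$ and adds multiples of other entries — but since $\alpha = e^{i\pi/p}$ has modulus $1$, a short calculation shows the modulus of the $(2,1)$-entry is unchanged. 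Hence the radius $1/|\tilde c| = 1/|c|$ is preserved, giving the radius claim. For the centers: the map $Y\mapsto A^n Y$ sends $Q_1 = Y^{-1}(\infty) = -d/c$ to $(A^n Y)^{-1}(\infty) = Y^{-1}(A^{-n}(\infty)) = Y^{-1}(\infty) = Q_1$ since $A$ fixes $\infty$, so left multiplication does not move $Q_1$; and $Q_2 = Y(\infty)$ maps to $A^n Y(\infty) = A^n(Q_2)$. Symmetrically, right multiplication $Y \mapsto Y A^m$ sends $Q_1 = Y^{-1}(\infty)$ to $(YA^m)^{-1}(\infty) = A^{-m}Y^{-1}(\infty) = A^{-m}(Q_1)$, and fixes $Q_2 = Y(\infty)$ since $Y A^m(\infty) = Y(\infty)$. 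So under $\widetilde Y = A^n Y A^m$ the two centers become $\widetilde Q_1 = A^{-m}(Q_1)$ and $\widetilde Q_2 = A^{n}(Q_2)$, which can be rotated independently about $\mathbf{P}$ by the two free parameters $m,n \in \{0,1,\dots,p-1\}$.

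It then remains to choose $m$ and $n$ so that both $\widetilde Q_1$ and $\widetilde Q_2$ land in the open sector $\mathbf{K}$. Since the sectors $A^j(\mathbf{K})$, $j = 0,\dots,p-1$, cover $\oC$ with overlaps only on their boundary edges (a set of measure zero), each of $Q_1, Q_2$ either lies in some $A^j(\mathbf{K})$ or lies on the boundary between two such sectors. The (generic) case is that $Q_1 \in A^{j_1}(\mathbf{K})$ and $Q_2 \in A^{j_2}(\mathbf{K})$ for some $j_1, j_2$, and then taking $m = j_1$ (so $A^{-m}(Q_1) = A^{-j_1}(Q_1) \in \mathbf{K}$) and $n = -j_2 \bmod p$ (so $A^{n}(Q_2) = A^{-j_2}(Q_2) \in \mathbf{K}$) does the job. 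The exceptional case, where $Q_1$ or $Q_2$ lies exactly on an edge ray of the stellation, requires a small argument: either the hypotheses of the ambient setup preclude this (the condition $c \neq 0$ and the nondegeneracy polynomial not vanishing, which was imposed precisely to keep $Y$'s fixed points off special loci), or one perturbs the choice of sector boundary, or one simply notes that a point on the closed edge of $\mathbf{K}$ can still be taken to lie in the open sector after a different choice of $m,n$ unless it equals $\mathbf{P}$ or $\infty$, which are the fixed points of $A$ and hence excluded by the hypothesis that $Y$ shares no fixed point with $A$. \textbf{The main obstacle} I anticipate is precisely this boundary bookkeeping — making sure the \emph{open} sector condition is genuinely attainable for both centers simultaneously — together with the routine but slightly fiddly verification that $|\tilde c| = |c|$ from the matrix product; both are elementary but need care, and the boundary case is where a reader would want the argument spelled out.
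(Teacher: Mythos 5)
Your proof is correct and follows essentially the same route as the paper: use the fact that $\mathbf{K}$ is a fundamental domain for $\langle A\rangle$ to choose $m,n$ rotating the disk centres into $\mathbf{K}$, and observe that $A$ acts by Euclidean rotation so the radii are unchanged (the paper argues this geometrically via $\widetilde Y(\widetilde D_1)=\oC\setminus\widetilde D_2$ and equality of radii rather than by computing $\abs{\tilde c}=\abs{c}$, but the content is identical). One small caveat on your boundary bookkeeping: the suggested fix of choosing different $m,n$ when a centre lies on an edge ray cannot work, since every $A$-translate of such a point again lies on an edge of the tiling; but this degenerate case (which the paper silently ignores) is harmless, because the downstream application in \cref{lem:first_bound} only requires the isometric disks to \emph{meet} the open sector $\mathbf{K}$, which they do whenever their centres lie in its closure.
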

\begin{proof}
  As $\mathbf{K}$ is a fundamental domain for $A$,  there are $m$ and $n$ so that $A^{-m}(D_1)$ and $A^n(D_2)$ have their respective centres in $\mathbf{K}$.
  Set $\widetilde{D}_1=A^{-m}(D_1)$, $\widetilde{D}_2=A^{n}(D_2)$. Then
  \begin{displaymath}
    A^{n}BA^{m}(\widetilde{D}_1)= A^{n}BA^{m}(A^{-m}(D_1))=A^{n}(\oC\setminus D_2)=\oC\setminus \widetilde{D}_2.
  \end{displaymath}
  Further $B $ acts isometrically on $\partial D_i$, and $A$ is a Euclidean isometry,  so these discs have the same radius and so must be the isometric discs.
\end{proof}
The utility of \cref{meetK} is that for all $n$ and $ m $, $\tr\,[A,A^nYA^m]=\tr\,[A,Y]$ and therefore $\langle A, Y\rangle =\langle A,\widetilde{Y}\rangle$.

\subsection{First bounds via isometric discs}
In the special case that $Y$ is elliptic,  the fixed point $0$ lies in both $D_1 $ and $ D_2 $ so both discs meet $\mathbf{K}$. \Cref{meetK} allows us to modify our generators in order to arrange
this desirable configuration more generally, and therefore the following result (which generalises a result of Chang, Jennings, and Ree \autocite{CJR} who proved a similar theorem for the\
case of two parabolic generators) gives us a bound for arbitrary $ Y $.

\begin{lemma}\label{lem:first_bound}
Let $ A $ and $ Y $ be given as at the start of the section and suppose that the isometric discs of $Y$ meet $\mathbf{K}$. If
\begin{itemize}
\item $A(D_1)\cap D_1 = \emptyset$, that is if and only if $\abs{c-d (\alpha-\overline{\alpha})}>2$, and
\item $A(D_2)\cap D_2 = \emptyset$, that is if and only if $\abs{c+a (\alpha-\overline{\alpha})}> 2$, and
\item $A(D_1)\cap D_2 = \emptyset$, that is if and only if $\abs{c+a\alpha+d\overline{\alpha}}> 2$, and
\item $A(D_2)\cap D_1 = \emptyset$, that is if and only if $\abs{c-a \overline{\alpha} - d \alpha}> 2$,
\end{itemize}
then $\langle A,Y\rangle$ is discrete and is the free product of $ \langle A \rangle $ and $ \langle Y \rangle $.
\end{lemma}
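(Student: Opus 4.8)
The plan is to verify directly that $(U,\mathbf{K})$ with $U=\oC\setminus\mathbf{K}$ is a conic interactive pair for $\{A,Y\}$, and then invoke the lemma of Maskit quoted earlier (the special case of \cite[\S VII.A.10]{Mas}) to conclude discreteness and that $\langle A,Y\rangle=\langle A\rangle*\langle Y\rangle$. Since $\mathbf{K}$ is an open sector of angle $2\pi/p$ with apex $\mathbf{P}$ and $A$ has order $p$, every nontrivial power $A^k$ moves $\mathbf{K}$ off itself, so $A^k(U)\subseteq\oC\setminus\mathbf{K}$ is not quite what we need---rather we need $A^k(\mathbf{K})\subseteq U$, i.e. $A^k(\mathbf{K})\cap\mathbf{K}=\emptyset$, which is precisely the statement that $\mathbf{K}$ is a fundamental sector for the cyclic rotation group $\langle A\rangle$. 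That half is automatic. The real content is the other half: every nontrivial power $Y^k$ must satisfy $Y^k(U)\subseteq\mathbf{K}$, equivalently $Y^k(\oC\setminus\mathbf{K})\subseteq\mathbf{K}$.

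First I would reduce the condition on all powers $Y^k$ to a condition on $Y^{\pm1}$ using the isometric-disk structure: from $Y(\oC\setminus(D_1\cup D_2))\subset D_1\cup D_2$ and the standard ping-pong on isometric circles, once one knows $D_1\cup D_2\subseteq\mathbf{K}^{c}$ is arranged suitably it suffices to control the single-step image. Concretely, the four displayed inequalities say exactly that each of the two isometric disks $D_1,D_2$ of $Y$ is disjoint from its image under $A$ and from the image of the other disk under $A$; combined with the hypothesis that $D_1,D_2$ meet $\mathbf{K}$ and the fact that $\mathbf{K}$ together with its $p-1$ rotates $A(\mathbf{K}),\dots,A^{p-1}(\mathbf{K})$ tile a neighbourhood of $\mathbf{P}$, the four conditions force $D_1\cup D_2\subseteq\mathbf{K}$. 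Indeed each disk has center in $\mathbf{K}$; if a disk spilled out of $\mathbf{K}$ it would cross an edge of $\mathbf{K}$ into an adjacent sector $A^{\pm1}(\mathbf{K})$, hence $D_i$ and $A^{\mp1}(D_i)$ (same disk, rotated) would overlap---but $A(D_i)\cap D_i=\emptyset$ rules out exactly this. The mixed conditions $A(D_1)\cap D_2=\emptyset=A(D_2)\cap D_1$ handle the possibility that the two disks straddle a common edge from opposite sides. So $D_1\cup D_2\subseteq\mathbf{K}$, and then $Y(\oC\setminus\mathbf{K})\subseteq Y(\oC\setminus(D_1\cup D_2))\subset D_1\cup D_2\subseteq\mathbf{K}$; the same argument applied to $Y^{-1}$ (whose isometric disks are the same pair $D_1,D_2$, swapped) gives $Y^{-1}(\oC\setminus\mathbf{K})\subseteq\mathbf{K}$, and a short induction propagates this to every nonzero power. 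That establishes the interactive-pair axiom for $\langle Y\rangle$.

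For the algebraic translations of the four disjointness conditions I would just compute: $A$ acts as the Euclidean motion $z\mapsto\alpha^2 z+\alpha$, which is a rotation about $\mathbf{P}$, so it moves centers and preserves radii; $A(D_1)\cap D_1=\emptyset$ becomes $|A(Q_1)-Q_1|>2/|c|$, and substituting $Q_1=-d/c$ and simplifying (using $\alpha^2 Q_1+\alpha-Q_1 = \alpha(1-\alpha)(-d/c)\cdot(\text{something})$, then clearing $1/|c|$ and the unit factor $|\alpha|$) yields $|c-d(\alpha-\bar\alpha)|>2$; the other three are the analogous computations with $Q_2=a/c$ and with the cross terms. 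These are routine and I will not grind through them. The only genuine obstacle is the geometric step in the previous paragraph---rigorously arguing that ``center in $\mathbf{K}$'' plus the four disjointness relations forces the \emph{whole} disk into $\mathbf{K}$, rather than merely into $\mathbf{K}\cup A(\mathbf{K})\cup A^{-1}(\mathbf{K})$; care is needed because $\mathbf{P}$ is a cone point and a disk centered in $\mathbf{K}$ could in principle wrap around $\mathbf{P}$ and re-enter $\mathbf{K}$ from a far sector when $p$ is small (e.g. $p=3$). One checks this cannot happen: the disk has radius $1/|c|$ and its center lies in $\mathbf{K}$, and if the disk reached a non-adjacent sector it would have to contain $\mathbf{P}$ or come within $1/|c|$ of it on two separated arcs, which again contradicts $A(D_i)\cap D_i=\emptyset$ for the appropriate power. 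Once this containment is secure, the rest of the proof is the invocation of Maskit's criterion via the conic-interactive-pair lemma.
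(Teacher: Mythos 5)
Your overall strategy---build an interactive pair and invoke Maskit's combination criterion \cite[\S VII.A.10]{Mas}---is the right one, and your reduction of the condition on all powers $Y^k$ to the isometric-disk inclusion $Y^{\pm k}(\oC\setminus(D_1\cup D_2))\subseteq D_1\cup D_2$ is fine. But the step you flag as ``the only genuine obstacle'' is in fact a genuine gap, and the claim you make to close it is false. You assert that if $D_i$ spilled out of $\mathbf{K}$ into an adjacent sector then $D_i$ and $A^{\pm1}(D_i)$ would overlap, so the hypothesis $A(D_i)\cap D_i=\emptyset$ forces $D_1\cup D_2\subseteq\mathbf{K}$. This does not follow: $A$ is a rotation by $2\pi/p$ about $\mathbf{P}$, so if the center $Q_i$ lies at distance $R$ from $\mathbf{P}$ then $\abs{A(Q_i)-Q_i}=2R\sin(\pi/p)$, which can be arbitrarily large compared with the diameter $2/\abs{c}$ of $D_i$, while $Q_i$ sits within $1/\abs{c}$ of an edge of $\mathbf{K}$. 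In that configuration all four inequalities hold, yet $D_i$ protrudes into the adjacent sector and $(\oC\setminus\mathbf{K},\mathbf{K})$ is \emph{not} an interactive pair for $\{A,Y\}$, because $Y^k(\oC\setminus\mathbf{K})\subseteq D_1\cup D_2\not\subseteq\mathbf{K}$. Note the paper itself warns of exactly this immediately after the lemma: the construction does not in general produce a \emph{conic} interactive pair, and one ``must modify the isometric disks of $Y$, unless they already lie in $\mathbf{K}$.'' So the conclusion you are trying to reach (a conic pair with sector $\mathbf{K}$) is strictly stronger than what the hypotheses give.

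The fix is not to force the disks into $\mathbf{K}$ but to perform surgery on $\mathbf{K}$ itself: set $V=\bigl(\mathbf{K}\cup D_1\cup D_2\bigr)\setminus\bigl(A^{-1}(D_1)\cup A(D_2)\bigr)$ (with the signs of the exponents chosen according to which edge each disk crosses). The four disjointness conditions guarantee that the deleted translates $A^{\pm1}(D_i)$ miss $D_1\cup D_2$, so $V$ still contains both isometric disks in full and remains a fundamental domain for $\langle A\rangle$; hence every nontrivial power of $A$ moves $V$ into $U=\oC\setminus V$, while $U$ lies in the exterior of $D_1\cup D_2$ and so every nontrivial power of $Y$ moves $U$ into $D_1\cup D_2\subseteq V$. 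Properness follows because the fixed points of $Y$ lie in the isometric disks and are therefore never $\langle Y\rangle$-equivalent to a point of $U$ (this is a check you omitted but need in order to apply Maskit's theorem). Your algebraic translations of the four conditions are correct in outline and are not the issue.
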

(The proof of the lemma also goes through when $ p = 2 $.)
\begin{proof}
  The equivalences between non-intersection of the discs and the inequalities are easy to prove. For instance, observe that $ D_1 $ has centre $ -d/c $
  and radius $ 1/\abs{c} $; the matrix $ A $ acts as a Euclidean rotation and so the image $ A(D_1) $ has the same radius as $ D_1 $ and centre $ A(-d/c) = \alpha^2(-d/c) + \alpha $.
  Thus $ A(D_1) \cap D_1 = \emptyset $ if and only if $ \abs{ (-d/c) - (\alpha^2(-d/c) + \alpha) } > 1/\abs{c} + 1/\abs{c} $; clearing denominators, this is equivalent to
  \begin{displaymath}
    \abs{-d + \alpha^2 d - \alpha c} > 2
  \end{displaymath}
  and multiplying through by $ 1 = \abs{\alpha^{-1}} = \abs{\overline{\alpha}} $ this becomes
  \begin{displaymath}
    \abs{ -d\overline{\alpha} + \alpha d - c } > 2
  \end{displaymath}
  which is clearly equivalent to the first inequality in the statement of the lemma. The other three equivalences are similar.

  To prove the lemma proper, we perform a surgery on the fundamental domain $ \mathbf{K} $ for $ \langle A \rangle $ by adding the fundamental discs of $ Y $ and deleting their $ A$-translates:
  \begin{displaymath}
    V =\left(\mathbf{K} \cup D_1 \cup D_2\right) \setminus (A^{-1}(D_1)\cup A(D_2)),
  \end{displaymath}
  where we may have to exchange one or both of $A$ for $A^{-1}$ depending on the signs of $\Re (-d/c)$ and $\Re (a/c)$. This situation is illustrated in \cref{fig5}.

  \begin{figure}
  \centering
  \includegraphics[width=\textwidth]{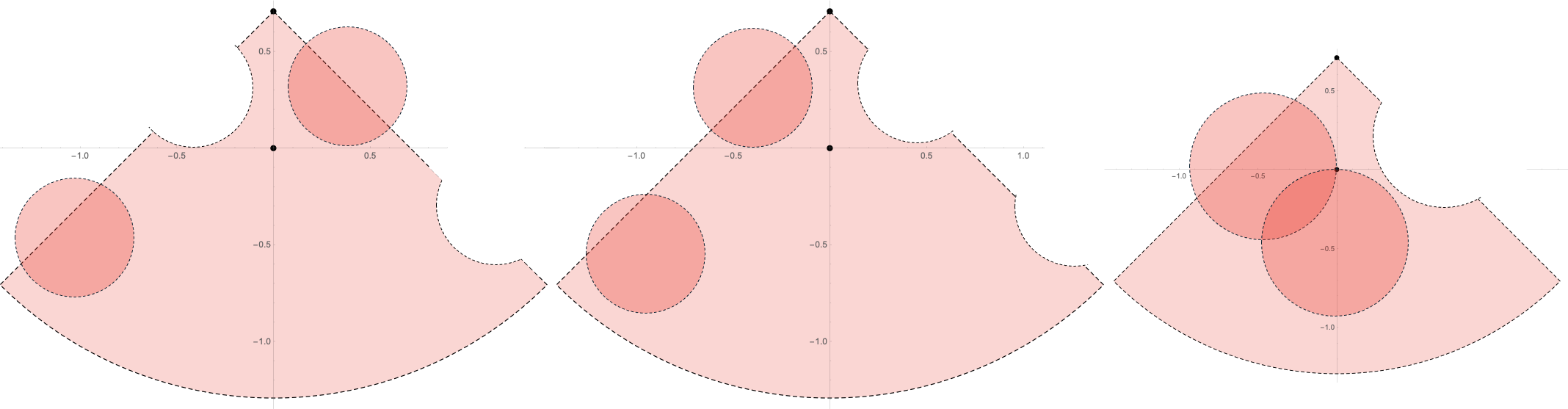}
  \caption{The new fundamental domain for $A$ consists of $\mathbf{K}$ together with the isometric discs of $Y$ and suitable images deleted. Left: The real part of the centres of discs have opposite signs.  Middle: Real part of centres have same (negative) sign. Right: Elliptic example.\label{fig5}}
  \end{figure}

  Let $ U = \hat{\mathbb{C}} \setminus V $. By the properties of isometric circles enumerated in \eqref{ICprop}, $ Y $ sends the exterior of $ D_1 $ into the interior of $ D_2 $. By
  construction, $ U $ lies in the exterior of $ D_1 $, and so $ U $ is moved off itself by $ Y $. Similarly, $ U $ is moved off itself by $ Y^{-1} $. By our assumptions on $A,D_1$ and $D_2$ we therefore
  have an interactive pair $ (U,V) $ and the proof is complete via \autocite[\S VII.A.10]{Mas} if we can show that it is proper. Let $ p $ be a fixed point of $ Y $; then $ p $ is
  not $ \langle Y \rangle$-equivalent to any point of $ U $, since the fixed points of $ Y^n $ all lie in the isometric discs of $ Y $.
\end{proof}

If $0$ is a fixed point for $Y$ (that is if $b=0$), then the four inequalities in \cref{lem:first_bound} represent discs about $c$ of radius $2$ as $a$ and $d$ are determined by the trace ($\beta+1/\beta$) and determinant ($ad=1$). A drawback of this construction is that to apply our earlier results on conic interactive pairs we really do need $\mathbf{K}$ unaltered and so must modify the isometric discs of $Y$, unless they already lie in $\mathbf{K}$.  In doing so the equations become so complicated that they are of little utility. However there is a special case where we can do this in a straightforward manner, and fortunately it is the case of primary interest to us.

\subsection{The case that $A$ and $B$ are elliptic}
Let $A$ and $B=B_\rho$ be defined as at \cref{XY} with $ \alpha = e^{i\pi/p} $ and $\beta=e^{i\pi/q}$ where $ p > 2 $. Let $ J $ be the involution $ J(z) = 1/z $.

\begin{lemma}\label{lem2} Necessary and sufficient conditions for the translates of $\IC\setminus \mathbf{K}$ under $B$ to be pairwise disjoint are the following:
\begin{displaymath}
\begin{array}{ll}
 \abs{\alpha  \left(\beta -\overline{\beta }\right)+\rho}>2, &
 \abs{\beta  \overline{\alpha }+\alpha  \overline{\beta }-\rho }>2, \\
 \abs{\overline{\alpha } \left(\overline{\beta }-\beta \right)+\rho}>2, &
 \abs{\overline{\alpha } \overline{\beta }+\alpha \beta +\rho}>2.
\end{array}
\end{displaymath}
If these four conditions hold for $\alpha$, $\beta$ and $\rho$,  then $\{A,B\}$ admits a conic interactive pair with sector $\mathbf{K}$.
\end{lemma}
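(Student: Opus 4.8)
My plan separates the lemma's two assertions: (a) the four inequalities are necessary and sufficient for the $q$ sets $B^k(\IC\setminus\mathbf{K})$, $k=0,\dots,q-1$, to be pairwise disjoint; and (b) if they hold then $\{A,B\}$ admits a conic interactive pair with sector $\mathbf{K}$. Claim (b) follows at once from (a): pairwise disjointness of the $q$ translates is equivalent to $B^k(\oC\setminus\mathbf{K})\subseteq\mathbf{K}$ for every $k\not\equiv 0\pmod q$ (each non-trivial translate must miss $B^0(\IC\setminus\mathbf{K})$, hence lies in $\mathbf{K}$, and conversely those inclusions make all the translates disjoint), while $A^m(\mathbf{K})\subseteq\oC\setminus\mathbf{K}$ for every $m\not\equiv 0\pmod p$ because $\mathbf{K}$ is an open fundamental sector for $\langle A\rangle$; thus $(\oC\setminus\mathbf{K},\mathbf{K})$ is an interactive pair for $(\langle B\rangle,\langle A\rangle)$, proper because $\mathbf{P}$ is fixed by $\langle A\rangle$ and so never moved into $\mathbf{K}$. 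The whole of the work is therefore in (a).

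For (a) I would first make the disjointness condition combinatorial. By $B$-equivariance the $q$ translates are pairwise disjoint iff $B^m(\IC\setminus\mathbf{K})\cap(\IC\setminus\mathbf{K})=\emptyset$ for $m=1,\dots,q-1$. Now $\IC\setminus\mathbf{K}$ is a closed Euclidean sector of angle $2\pi(1-1/p)$ with apex $\mathbf{P}=\tfrac{i}{2}\csc(\pi/p)$, bounded by the two edges $\ell_1,\ell_2$ of $\mathbf{K}$ (rays from $\mathbf{P}$ in directions $-i\alpha$ and $-i\bar\alpha$), so $B^m(\IC\setminus\mathbf{K})$ is the topological disk bounded by the circular arcs $B^m(\ell_1),B^m(\ell_2)$ from $B^m(\mathbf{P})$ to $B^m(\infty)$. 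Since $B$ fixes $0\in\mathbf{K}$, no non-trivial translate contains $0$, so no translate can contain $\IC\setminus\mathbf{K}$; as the two domains in question have Jordan-curve boundaries, this means they meet iff their boundaries cross, i.e. iff $B^m(\ell_i)\cap\ell_j\neq\emptyset$ for some $m\in\{1,\dots,q-1\}$ and $i,j\in\{1,2\}$.

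I claim this family reduces to just the cases $m=1$, which (since $B(\ell_i)\cap\ell_j=\emptyset\iff\ell_i\cap B^{-1}(\ell_j)=\emptyset$) already carry the case $m=-1$, and that the resulting four conditions $B(\ell_i)\cap\ell_j=\emptyset$, $i,j\in\{1,2\}$, are exactly the four displayed inequalities. The latter is a direct computation: write $\ell_i$ as the line through $\mathbf{P}$ in direction $-i\alpha$ or $-i\bar\alpha$, push it forward by $B=\left(\begin{smallmatrix}\beta&0\\\rho&\bar\beta\end{smallmatrix}\right)$ to a line or circle, impose that the image arc misses $\ell_j$, clear denominators, and use $\abs\alpha=\abs\beta=1$; out falls the statement that each of the four displayed bilinear expressions exceeds $2$ in modulus. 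As a consistency check, two of the four must be the two ``cross'' conditions of \cref{lem:first_bound} (on the interaction of the $A$-translate of one isometric disk of $Y$ with the other) specialised to $Y=B$ — namely $\abs{\rho+\alpha\beta+\bar\alpha\bar\beta}>2$ and $\abs{\rho-\alpha\bar\beta-\bar\alpha\beta}>2$ — and inspection of the list confirms this.

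The step I expect to be the real obstacle is the reduction from all $m\in\{1,\dots,q-1\}$ to $m=\pm1$: for a rotation, disjointness of adjacent rotates does not in general force disjointness of all of them. To close this I would pass to the picture in which $B$ is the Euclidean rotation $z\mapsto e^{\pm2\pi i/q}z$ — conjugating by a M\"obius map sending the fixed points $0$ and $w_0=2i\sin(\pi/q)/\rho$ of $B$ to $0$ and $\infty$ — and prove that the image of $\IC\setminus\mathbf{K}$ meets every circle about the origin in a single arc; then its rotates are pairwise disjoint the moment the two adjacent ones are. This radial convexity should follow from $\IC\setminus\mathbf{K}$ being a round sector (so bounded by two lines, whose M\"obius images are two circular arcs with no spiralling) that contains both fixed points of $B$ in its complement. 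An alternative is to establish the full disjointness directly by a ping-pong induction on the exponent, the delicate point there being the control of the image lunes $B^k(\mathbf{K})$ relative to $\partial\mathbf{K}$. A secondary technicality to handle along the way is that the four inequalities genuinely force $w_0\in\mathbf{K}$ — needed both for the argument above and so that $\langle B\rangle$ really moves $\oC\setminus\mathbf{K}$ off itself — which I would get by checking that $w_0\in\partial\mathbf{K}$ is incompatible with the open region cut out by the four inequalities.
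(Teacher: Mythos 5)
Your overall architecture is sound: you correctly derive the conic interactive pair from pairwise disjointness of the translates, correctly use the fixed point $0\in\mathbf{K}$ to rule out nesting, correctly identify the four displayed inequalities as the $m=\pm1$ disjointness conditions (and your consistency check against \cref{lem:first_bound} is right). You have also correctly located the step that carries all the weight: sufficiency requires $B^m(\oC\setminus\mathbf{K})\cap(\oC\setminus\mathbf{K})=\emptyset$ for \emph{every} $m\not\equiv 0\pmod q$, not just $m=\pm 1$, and for a union-of-translates under a finite-order rotation this is genuinely not automatic. But that is exactly where your proposal stops being a proof. The radial-convexity claim --- that after sending the fixed points of $B$ to $0$ and $\infty$ the image of $\oC\setminus\mathbf{K}$ meets every circle about the origin in a single arc --- is offered only with ``should follow'', and it is not a formal consequence of $\oC\setminus\mathbf{K}$ being a sector: the image is a region bounded by two circular arcs whose position depends on $\rho$, and a two-arc region avoiding $0$ and $\infty$ can perfectly well fail radial convexity. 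Worse, your argument needs $w_0\in\mathbf{K}$, which you propose to extract from the very inequalities you are trying to interpret --- a circularity you flag but do not resolve. The ``ping-pong induction'' alternative is likewise only named. As written, the proposal establishes necessity and would establish sufficiency only when $m=\pm1$ exhausts the nontrivial powers.

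The paper closes this gap with a different normalisation, and the difference is not cosmetic. Conjugating by the involution $J(z)=1/z$ sends the fixed point $0$ of $B$ to $\infty$, so $\widetilde{B}=JBJ\colon z\mapsto\overline{\beta}^{\,2}z+\overline{\beta}\rho$ is a \emph{Euclidean rotation}; and --- this is the point your choice of conjugation misses --- $J$ simultaneously sends $\oC\setminus\mathbf{K}$ to the union $\widetilde{D}_1\cup\widetilde{D}_2$ of the two \emph{unit} isometric disks of $\widetilde{A}=JAJ$, centred at $-\alpha$ and $\overline{\alpha}$. Disjointness of translates then becomes the elementary condition ``distance between centres of unit disks exceeds $2$'', the four inequalities are literally $\abs{\widetilde{B}(c_i)-c_j}>2$ (no line-versus-circle computation, no clearing of denominators), and the reduction of the higher powers to $m=\pm 1$ becomes a short estimate on chords of two concentric circles about the rotation centre: the same-circle distances are minimised at adjacent rotates, and the overlap $\abs{c_1-c_2}=2\cos(\pi/p)<2$ together with the same-disk conditions forces the angle subtended by $c_1,c_2$ at the rotation centre to be less than $2\pi/q$, so the cross-distances are minimised at $m=\pm1$ as well. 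I would recommend replacing your conjugation by this one; with it, every step you flagged as delicate becomes a one-line Euclidean computation.
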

\begin{proof}
It is clearly equivalent to identify conditions for when the translates of $J(\oC\setminus \mathbf{K})=\oC\setminus J(\mathbf{K})$ under $\widetilde{B}=JBJ$ are pairwise disjoint.
First observe that
\begin{displaymath}
  \widetilde{B} = \begin{pmatrix} \overline{\beta} & \rho \\  0 & \beta  \end{pmatrix}\; \text{which represents}\; \widetilde{B} : z\mapsto \overline {\beta}^2 z+\overline {\beta}\rho
\end{displaymath}
is a Euclidean isometry, a rotation with fixed point $\widetilde{P} = - (i\rho/2) \csc(\pi/q)$.

Next we want to identify $J(\mathbf{K})$.  The boundary of this region must be symmetric across the imaginary axis, and must be the intersection of two discs which meet at angle $2\pi/p$ at the points $0=J(\infty)$ and $J(P)=1/P = -2i\sin (\pi/p)$.  The centre of these discs must lie on the line $\Im(z)=-\sin (\pi/p)$ and both discs must have the same radius.  This means they are isometric discs for a rotation of order $p$ with fixed points $0$ and $-2i\sin (\pi/p)$. That must be
\begin{displaymath}
\widetilde{A}=JAJ= \begin{pmatrix} \overline{\alpha} & 0 \\ 1 & \alpha \end{pmatrix}\;\text{which represents}\; \widetilde{A}:z\mapsto \frac{\overline{\alpha} z}{z+\alpha}.
\end{displaymath}
The transformation $\widetilde{A}$ has isometric discs $\widetilde{D_1}=\{z:\abs{z+\alpha}<1\} $ and $ \widetilde{D_2}=\{z:\abs{z-\overline{\alpha}}<=>1\}$ and fixed points $0$ and $(i/2)\csc(\pi/p)$. Then $J(\mathbf{K})=\oC\setminus (\widetilde{D}_1\cup \widetilde{D}_2)$.
We can now make exactly the same calculation as we did previously,  considering when these discs are moved off themselves.
This gives us the four inequalities from the lemma statement, and gives us an interactive pair with cone $ \mathbf{K} $ (moved into $ \mathbb{C} \setminus \mathbf{K} $ by $ A $) and $ U = \mathbb{C} \setminus \mathbf{K} $
(moved into $ \mathbf{K} $ by $ B $).
\end{proof}

From the information in the proof, we may construct systems of discs similar to those which appeared in the proof of \cref{lem:first_bound}. As in that proof,
we consider a surgery on a sector. Let $\widetilde{K}$ be the sector symmetric about the imaginary axis with apex $\widetilde{P} = - (i\rho/2) \csc(\pi/q)$ that
contains the origin (i.e.\ a fundamental domain for $ \widetilde{B} $), and add and delete translates of $ \widetilde{D_i} $ to obtain a set
\begin{displaymath}
  \widetilde{V} = \left(\widetilde{K} \cup \widetilde{D_1}\cup \widetilde{D_2}\right)\setminus \left(\widetilde{A}(\widetilde{D_1})\cup\widetilde{A}^{-1}(\widetilde{D_2})\right)
\end{displaymath}
(where possibly we might need to replace $ A $ with $ A^{-1} $ or vice versa, depending on the circle arrangements). Then, just as in the proof of \cref{lem:first_bound},
$ \widetilde{V} $ and its complement form a proper interactive pair.

We observe that, when $ p = q $, the bounds are the same as those which were constructed in \cref{lem:first_bound}. Illustrated in \cref{discfamilies} are a few of the families of discs which arise as bounds.
With $\alpha=e^{i\pi/p}$ and $\beta=e^{i\pi/q}$ we write the conditions as
\begin{displaymath}
  \begin{array}{ll}
    \abs{2i e^{i\pi/p}  \sin\frac{\pi}{q}+\rho}>2, &
    \abs{2\cos\left( \frac{\pi}{p}-\frac{\pi}{q}\right)-\rho}>2, \\
    \abs{2i e^{-i\pi/p} \sin\frac{\pi}{q}-\rho}>2, &
    \abs{2\cos\left( \frac{\pi}{p}+\frac{\pi}{q}\right)+\rho}>2.
  \end{array}
\end{displaymath}
There is quite a strong lack of symmetry between $\alpha$ and $\beta$ in these discs.  This is evidenced by \cref{fig8}, where we show the discs which arise for the deformation spaces $ \mathcal{R}_{7,3} $
and $ \mathcal{R}_{3,7} $, which are equal to each other as subsets of $ \C $.  We will use this asymmetry below to improve certain estimates.

\begin{figure}
  \centering
  \includegraphics[width=\textwidth]{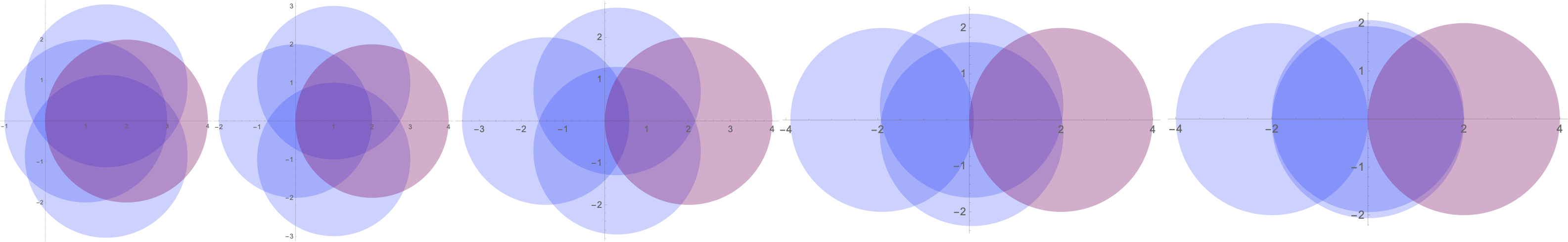}
  \caption{Families of discs from \cref{lem2}, $p=q$. From left: $p=3,4,8,20,100$.  A disc of radius $2$ is always included when $p=q$ and is shaded red. \label{discfamilies}}
\end{figure}

\begin{figure}
  \centering
  \includegraphics[width=0.5\textwidth]{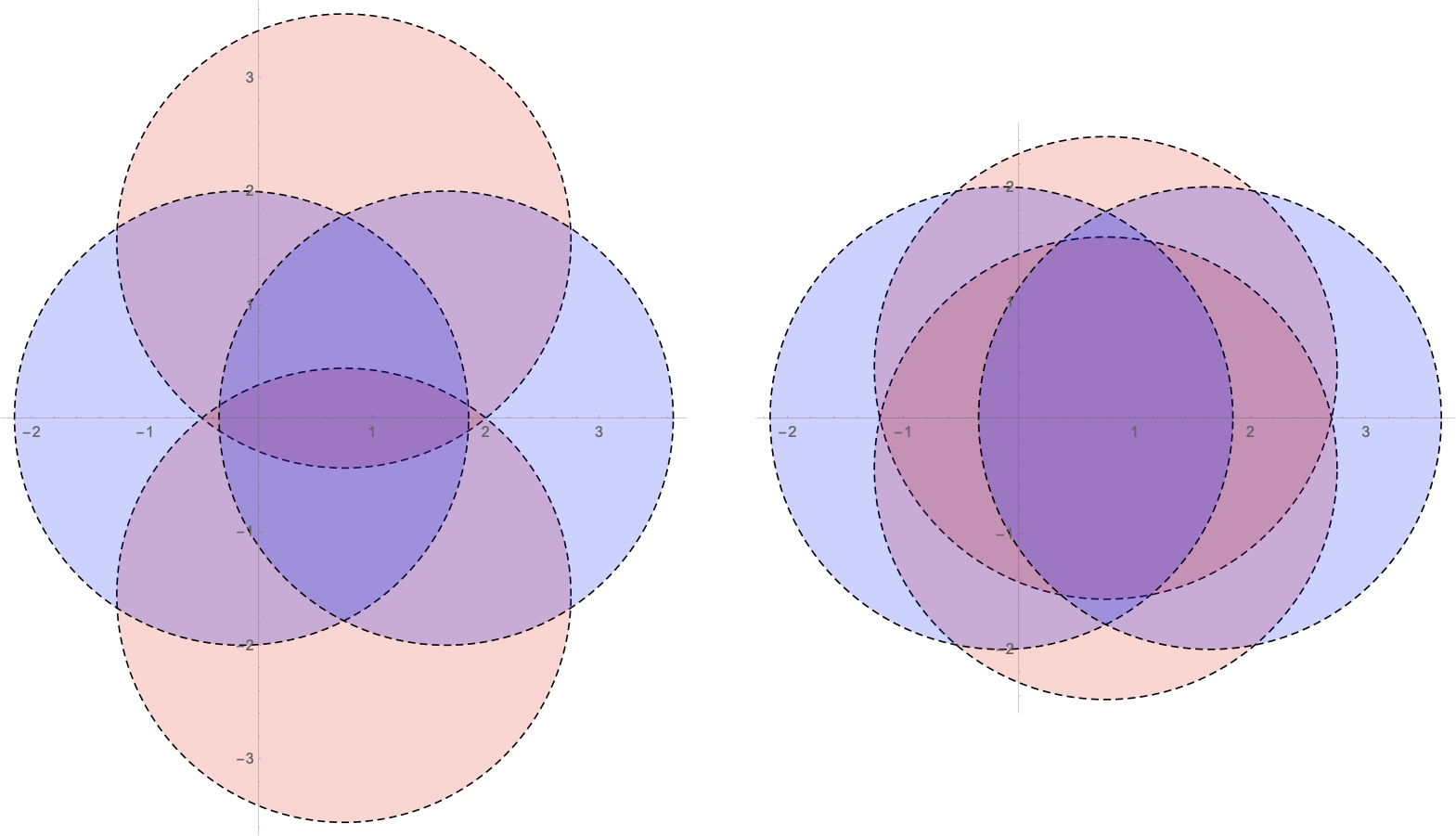}
  \caption{The bounds are asymmetric in $ p $ and $ q $. Left: $p=7$ and $q=3$. Right: $p=3$ and $q=7$.\label{fig8}}
\end{figure}

The following lemma, which explains the definitions of $ \rho^*_{p,q} $ and $ x_{p,q} $ in \cref{defn:omega_bounds}, is shown by direct computation.
\begin{lemma}
  The point $\rho^*_{p,q}$  lies on the intersection of the two circles
  \begin{displaymath}
    \abs{2i e^{i\pi/p}  \sin\frac{\pi}{q}+\rho}=2 \;\text{and}\; \abs{2\cos\left( \frac{\pi}{p}-\frac{\pi}{q}\right)-\rho} =2
  \end{displaymath}
  and lies outside the other two discs of \cref{lem2}. The line  $\{\rho+it\rho:t\in\IR\}$ meets the real axis at the point $x_{p,q}$. \qed
\end{lemma}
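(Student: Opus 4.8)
The lemma is entirely computational---the author's ``by direct computation'' is the whole argument---so the plan is to verify each clause by substitution, organised so the trigonometry stays short. Two devices keep it in hand: rewrite every product of half-angle functions $\cos\tfrac{\pi}{p},\sin\tfrac{\pi}{q},\dots$ in terms of $\cos\tfrac{2\pi}{p}$, $\cos\tfrac{2\pi}{q}$, $\sin\tfrac{2\pi}{p}\sin\tfrac{2\pi}{q}$ and $\cos(\tfrac{2\pi}{p}\pm\tfrac{2\pi}{q})$ via product-to-sum, and replace $\xi^{2}$ by $14-2\cos\tfrac{2\pi}{p}$ whenever it occurs. First I would record that the four discs of \cref{lem2} all have radius $2$, with centres $c_{1}=-2ie^{i\pi/p}\sin\tfrac{\pi}{q}$, $c_{2}=2\cos(\tfrac{\pi}{p}-\tfrac{\pi}{q})$, $c_{3}=2ie^{-i\pi/p}\sin\tfrac{\pi}{q}=\overline{c_{1}}$, $c_{4}=-2\cos(\tfrac{\pi}{p}+\tfrac{\pi}{q})$, and note for $3\le p\le q$ that $c_{2},c_{4}\in\IR$ with $c_{2}-c_{4}=4\cos\tfrac{\pi}{p}\cos\tfrac{\pi}{q}>0$, while $\Im c_{1}=-2\cos\tfrac{\pi}{p}\sin\tfrac{\pi}{q}\neq0$.

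Write $\rho^{*}_{p,q}=u+iv$. The central computation is $\abs{\rho^{*}_{p,q}}^{2}=u^{2}+v^{2}$: the $\xi^{2}$-part collapses to $\tfrac14\xi^{2}=\tfrac72-\tfrac12\cos\tfrac{2\pi}{p}$, the $\xi$-linear part collects---using $2\cos(\tfrac{\pi}{p}-\tfrac{\pi}{q})\sin\tfrac{\pi}{q}=\sin\tfrac{\pi}{p}-\sin[\tfrac{\pi}{p}-\tfrac{2\pi}{q}]$---into $\xi\bigl(\sin\tfrac{\pi}{p}-\sin[\tfrac{\pi}{p}-\tfrac{2\pi}{q}]\bigr)$, and the remaining terms telescope to $\cos[\tfrac{2\pi}{p}-\tfrac{2\pi}{q}]-\cos\tfrac{2\pi}{p}-\cos\tfrac{2\pi}{q}+5$. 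Hence $\abs{\rho^{*}_{p,q}}^{2}$ is exactly the numerator of $x_{p,q}$. To put $\rho^{*}_{p,q}$ on the circle of radius $2$ about $c_{2}$, expand $\abs{\rho^{*}_{p,q}-c_{2}}^{2}=\abs{\rho^{*}_{p,q}}^{2}-2c_{2}u+c_{2}^{2}$; the $\xi$-terms cancel against those already computed and the remainder is a plain trigonometric identity equal to $4$. For the circle about $c_{1}$ the radii being equal, it suffices additionally to place $\rho^{*}_{p,q}$ on the radical axis $\{z:2\Re(z\,\overline{c_{2}-c_{1}})=\abs{c_{2}}^{2}-\abs{c_{1}}^{2}\}$; here $c_{2}-c_{1}=2\cos\tfrac{\pi}{p}\,e^{i\pi/q}$ and $\abs{c_{1}}=2\sin\tfrac{\pi}{q}$ turn the requirement into a short linear identity (once more the $\xi$-terms drop out).

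The two exclusions then come for free from the two circle identities. Since $c_{2},c_{4}$ are real and $\abs{\rho^{*}_{p,q}-c_{2}}^{2}=4$, one has $\abs{\rho^{*}_{p,q}-c_{4}}^{2}-4=(c_{2}-c_{4})(2u-c_{2}-c_{4})=4\cos\tfrac{\pi}{p}\cos\tfrac{\pi}{q}\bigl(2\cos\tfrac{\pi}{p}\cos\tfrac{\pi}{q}+\xi\sin\tfrac{\pi}{q}\bigr)>0$. Since $c_{3}=\overline{c_{1}}$ and $\abs{\rho^{*}_{p,q}-c_{1}}^{2}=4$, one gets $\abs{\rho^{*}_{p,q}-c_{3}}^{2}-4=4\,\Im(\rho^{*}_{p,q})\,\Im(c_{1})$, which has the correct sign for $3\le p\le q$; so $\rho^{*}_{p,q}$ lies strictly outside the remaining two discs of \cref{lem2}. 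For the last clause: $\Re\rho^{*}_{p,q}=u$ is a sum of positive terms, hence nonzero, so the imaginary part of $(1+it)\rho^{*}_{p,q}$ vanishes at the unique real $t=-v/u$, where $(1+it)\rho^{*}_{p,q}=\abs{\rho^{*}_{p,q}}^{2}/u$; by the first computation of the previous paragraph this is precisely $x_{p,q}$.

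I expect no real obstacle---the content is bookkeeping. The one thing that wants care is keeping the product-to-sum reductions compact; the point of the radical-axis shortcut, and of the fact that the $\xi$-linear terms always cancel in the circle identities, is exactly that each verification then fits in a couple of lines.
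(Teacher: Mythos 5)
Your overall strategy---direct verification by substitution, organised around $\xi^{2}=14-2\cos\frac{2\pi}{p}$ and product-to-sum reductions---is exactly what the paper intends (its entire proof is the phrase ``by direct computation''), and most of your individual claims are correct: $\abs{\rho^{*}_{p,q}}^{2}$ is indeed the numerator of $x_{p,q}$ while $\Re\rho^{*}_{p,q}$ is its denominator, so the last clause follows as you say; the identity $\abs{\rho^{*}_{p,q}-c_{2}}^{2}=4$ checks out; and the exclusion from the disc about $c_{4}$ via $(c_{2}-c_{4})(2u-c_{2}-c_{4})>0$ is fine.

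There is, however, one genuine failure, and it is in the identity you lean on twice. The point $\rho^{*}_{p,q}$ does \emph{not} lie on the circle of radius $2$ about your $c_{1}=-2ie^{i\pi/p}\sin\frac{\pi}{q}$; it lies on the conjugate circle about $c_{3}=2ie^{-i\pi/p}\sin\frac{\pi}{q}=\overline{c_{1}}$, i.e.\ the \emph{third} condition of \cref{lem2}, whose centre is the one in the upper half-plane where $\rho^{*}_{p,q}$ sits. Quantitatively, $\abs{\rho^{*}_{p,q}-c_{1}}^{2}-\abs{\rho^{*}_{p,q}-c_{3}}^{2}=-4\,\Im(\rho^{*}_{p,q})\,\Im(c_{1})=8\cos\frac{\pi}{p}\sin\frac{\pi}{q}\,\Im\rho^{*}_{p,q}>0$, so $\abs{\rho^{*}_{p,q}-c_{1}}^{2}=4+8\cos\frac{\pi}{p}\sin\frac{\pi}{q}\,\Im\rho^{*}_{p,q}$ (for $p=q=3$ this is about $8.85$, not $4$). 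Consequently your radical-axis step for the pair $(c_{1},c_{2})$ does not close---the ``short linear identity'' you predict is false as stated; it holds for the pair $(c_{3},c_{2})$, where $c_{2}-c_{3}=2\cos\frac{\pi}{p}\,e^{-i\pi/q}$---and your final sign check is inverted: the disc from which $\rho^{*}_{p,q}$ must be shown to be excluded is the one about $c_{1}$, and the exclusion then follows from the displayed difference being positive. To be fair, the error is inherited from the lemma statement itself, which carries a sign typo ($e^{i\pi/p}$ where $e^{-i\pi/p}$ is meant, consistent with the formula for $\rho^{*}_{p,q}$ and with the role this point plays in the proof of \cref{Vpq}); but a writeup asserting $\abs{\rho^{*}_{p,q}-c_{1}}^{2}=4$ is asserting something false, and actually performing your own computation would have exposed it. With $c_{1}$ and $c_{3}$ interchanged throughout, your argument goes through.
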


Notice that $\rho^*_{p,q}\neq \rho^*_{q,p}$ in general.   At this point we can give first estimates on the values of $\rho$, which is most of our main result \cref{thm:main}.

\begin{theorem}\label{Vpq}
  Let $X,Y\in \SL(2,\IC)$ be primitive elliptic elements of order $p$ and $q$, $3\leq p\leq q$. Let $\rho$ be a solution of
  \begin{displaymath}
    \rho  \left( \rho-4\sin \frac{\pi}{p}\sin\frac{\pi}{q} \right) = \tr\,[X,Y]-2.
  \end{displaymath}
  Let $ \Omega'_{p,q} $ be the convex polygon determined by the lines in (1), (3), and (4) of \cref{defn:omega_bounds} together with
  \begin{enumerate}
    \item[(2$'$)] the two lines $\left\{z:\Im z=\pm 2 \pm 2 \cos \frac{\pi}{q} \sin \frac{\pi}{q} \right\}$.
  \end{enumerate}
  If $\rho\not\in \Omega'_{p,q}$, then $\langle X,Y\rangle$ is discrete and free on the given generating set.
\end{theorem}
\begin{proof}
  The assumption on $\rho$ implies that $\langle X,Y \rangle$ is conjugate to $\langle A,B_\rho \rangle$ with $A$ and $B=B_\rho$ as at \cref{XY}. The bounds on the real part and imaginary parts of $\rho$,
  i.e.\ condition (1) in \cref{defn:omega_bounds} and condition (2$'$) in the theorem statement, follow since the four excluded discs lie in the intersection of these strips. Next, the
  points $\rho^*_{p,q}$ and $\rho^*_{q,p}$ lie in the positive quadrant of $\IC$ and on the boundaries of the union of these four discs (though two of the discs are different and come from
  interchanging $p$ and $q$).  \Cref{abconic} now applies as our calculations show the remaining  lines pass through $\rho^*_{p,q}$ in the direction of $i\rho^*_{p,q}$. This gives us
  conditions (3) and (4) of \cref{defn:omega_bounds}.
\end{proof}

\Cref{fig9} indicates the lines and discs for a range of $p$ and $q$. It is clear that some improvements can be made.  For instance \cref{abconic} indicates the entire envelope of lines
through the points  $\gamma=2+2 \cos\left(\frac{\pi}{p}-\frac{\pi}{q}\right)+2\cos(\theta)+i \sin(\theta)$ in the direction $i\gamma$ ($0\leq \theta<\arg(\rho^*_{p,q})$), i.e.\ points on
the boundary of the  right-most disc with real part larger than $\rho^*_{p,q}$, lie in the free space. In practice this is seldom much better than what we have illustrated when one of $p$ or $q$
is large. Also,  the real part bound is sharp and holds with equality for the $(p,q,\infty)$ triangle group and its image under $ \sigma $ of \cref{sym}, which are the points
\begin{displaymath}
  \rho = \pm2 \pm 2\cos\left(\frac{\pi}{a} \mp \frac{\pi}{b}\right)\!.
\end{displaymath}
The imaginary part bound is asymptotically sharp as $p,q\to \infty$, and converges to the slope $1/2$ cusp group of \cref{thmx} on the boundary of the $ \mathcal{R}_{p,q} $. Different arguments
are needed to give a horizontal strip with sharp bounds at this cusp, and this is the content of the next section.

\begin{figure}
\centering
\includegraphics[width=\textwidth]{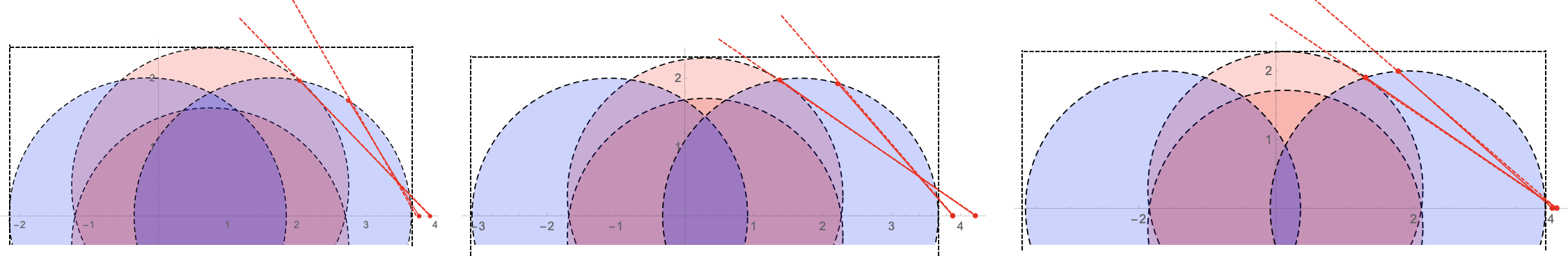}
\caption{The region $\Omega_{p,q}$ intersecting the positive quadrant for a range of $p$ and $q$. From left: $(p,q)=(3,7),\; (4,15), \; (7,20)$.\label{fig9}}
\end{figure}

\section{A different parameterisation of the character variety}\label{sec:ref}
Isometric circles are not conjugacy invariants: they measure the action of the transformations with respect to Euclidean geometry rather than conformal geometry, so
if $ X $ is a M\"obius transformation with isometric circles $ C $ and $ D $, and $ Y $ is any M\"obius transformation which does not fix $ \infty $, then the isometric
circles of $ YXY^{-1} $ will in general \emph{not} be $ Y(C) $ and $ Y(D) $. Thus choosing a different conjugacy representative for our groups changes the bounds obtained. Set
\begin{displaymath}
  U = \begin{pmatrix}
  \cos \frac{\pi }{p}  & -\sin\frac{\pi }{p}  \\
  \sin \frac{\pi }{p}  & \cos \frac{\pi }{p}  \\
  \end{pmatrix}\;\text{and}\;
  V=\begin{pmatrix}
  \cos \frac{\pi }{q}  & -\lambda  \sin  \frac{\pi }{q}   \\
  \lambda^{-1} \sin  \frac{\pi }{q} & \cos  \frac{\pi }{q}  \\
 \end{pmatrix}\!.
\end{displaymath}

Interchanging $\lambda$ with $-\lambda$ changes $V$ to $V^{-1}$ and preserves discreteness. Since
\begin{displaymath}
  \tr\, [U,V_\lambda]-2 = \left(\lambda-\frac{1}{\lambda}\right)^2\sin^2 \frac{\pi }{p}  \sin^2  \frac{\pi }{q},
\end{displaymath}
interchanging $\lambda$ and $1/\lambda$ does not change the conjugacy class and the formula is symmetric in $p$ and $q$, so we may always assume that $ \abs{\lambda} \geq 1 $ and $ p \leq q $.
These conditions are used in the choices made later in the section.

We can derive the relation between $ \lambda $ and $ \rho $ when the two groups $\langle X,Y_\rho\rangle$ and $\langle U,V_\lambda\rangle$ are conjugate. We
use the conjugacy invariance of the commutator trace to obtain the equation
\begin{displaymath}
  \tr\, [X,Y_\rho]-2 = \rho  \left(\rho -4 \sin \frac{\pi }{p} \sin \frac{\pi }{q} \right) =(\lambda-1/\lambda)^2\sin^2 \frac{\pi }{p}\sin^2 \frac{\pi }{q}
\end{displaymath}
whose two solutions are given by
\begin{equation}\label{rholambda}
  \rho = -\frac{(\lambda -1)^2}{\lambda } \sin \frac{\pi }{p}  \sin \frac{\pi }{q}\;\text{and}\; \rho = \frac{(\lambda +1)^2}{\lambda } \sin  \frac{\pi }{p}  \sin  \frac{\pi }{q}
\end{equation}

The fixed points of $U$ are $\pm i$, the centres of its isometric circles are $\pm\cot \frac{\pi }{p}$, and each isometric disc has radius $\csc\frac{\pi }{p}$.  For $V$ the fixed points are $\pm \lambda$ and
the isometric disc have centres $\pm\lambda \cot \frac{\pi }{q}$ with radii $\abs{\lambda}\csc\frac{\pi }{q}$. A sufficient condition for the group to lie (up to conjugacy) in $\mathcal{R}_{p,q}$ is that the
isometric discs of $U$ both lie in the intersection of the isometric discs of $V$. This gives us:
\begin{lemma}\label{ineq} Suppose that $\lambda$ lies inside the region defined by the inequality
  \begin{displaymath}
    \abs{\lambda\cot \frac{\pi }{q} \pm  \cot \frac{\pi }{p}}+ \csc\frac{\pi }{p} \leq \abs{\lambda} \csc\frac{\pi }{q}.
  \end{displaymath}
 Then $\langle U,V_\lambda\rangle$ is discrete and free on the indicated generators. \qed
\end{lemma}

\begin{example}
  In \Cref{fig15}, we compare the bounds from \cref{ineq} (transformed into $\rho$-coordinates via \cref{rholambda}) and the bounds from \cref{lem2} on the same axes for a
  range of different values of $ p $. The figure clearly shows that when $ p $ is small then we obtain better bounds using $ \lambda$-coordinates. As $ p \to \infty $
  and the isometric circles approach tangency, it is harder and harder for the isometric circles to give an interactive pair and so for large $ p $ the $ \rho$-bounds
  become stronger than the $ \lambda$-bounds.
\end{example}

\begin{figure}
\centering
\includegraphics[width=\textwidth]{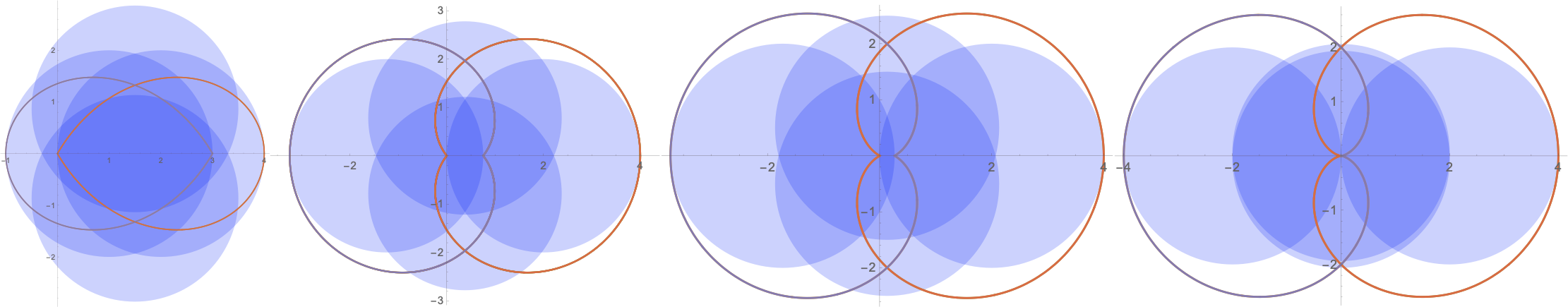}
\caption{Shaded discs from \cref{lem2} compared with the $\lambda$ regions of \cref{ineq}, when $ p = q $. From left : $p=3, 7, 15, 100$.\label{fig15}}
\end{figure}

Motivated by the example $ p = q $, we compare the information in the two representations for the conjugate groups $\langle U,V_\lambda\rangle $ and $\langle A,B_\rho \rangle$.
\begin{theorem}\label{lem:main_res_part_2}
  If $\rho\in \oC \setminus \overline{\mathcal{R}}_{p,q}$, then $  \abs{\Im \rho } <  2  \sqrt{1-\sin ^2 \frac{\pi }{p}  \sin ^2 \frac{\pi }{q} } $. These lines pass through the $ \pm \frac{1}{2} $ cusp
  groups on $ \partial \mathcal{R}_{p,q} $ which were defined in \cref{thmx}.\qed
\end{theorem}

\begin{proof}
  Let $\omega$ be the involution
  \begin{displaymath}
    \omega(z)=\frac{i z+\csc \frac{\pi }{p} -1}{z-i}.
  \end{displaymath}
  This is represented by the matrix
  \begin{displaymath}
  W=\frac{1}{\sqrt{1-2 \sin  \frac{\pi }{p} }}
  \begin{pmatrix}
    \sqrt{\sin \frac{\pi }{p} } & \frac{i \left(\sin \frac{\pi }{p} -1\right)}{\sqrt{\sin \frac{\pi }{p} }} \\
    -i \sqrt{\sin \frac{\pi }{p} } & -\sqrt{\sin \frac{\pi }{p} }
  \end{pmatrix}.
  \end{displaymath}
  One can check that $WUW^{-1} = A$ and that
  \begin{displaymath}
    WVW^{-1} = \sin \frac{\pi}{q}
    \begin{pmatrix}
    \cot  \frac{\pi }{q} +\frac{i  \left((\lambda ^2+1) \sin\frac{\pi }{p}-1\right)}{\lambda  \left(2 \sin\frac{\pi }{p}-1\right)} &
    -\frac{\left(\lambda -\csc\frac{\pi }{p}+1\right) \left(\lambda +\csc\frac{\pi }{p}-1\right)}{\lambda  \left(\csc\frac{\pi }{p}-2\right)} \\
    -\frac{\left(\lambda ^2-1\right) }{\lambda  \left(\csc\frac{\pi }{p}-2\right)} &
    \cot\frac{\pi }{q}-\frac{i \left(\left(\lambda ^2+1\right) \sin\frac{\pi }{p}-1\right)}{\lambda  \left(2 \sin\frac{\pi }{p}-1\right)}
    \end{pmatrix}
  \end{displaymath}
  The mapping $\omega$ preserves the imaginary axis,  and $\omega(\infty)=i\in \mathbf{K}$, so the complement of the isometric discs of $A$ is mapped to $\mathbf{K}$. Thus $A$ and $Y := WVW^{-1} $ form a conic interactive pair. In view of \cref{thm4} we set (using \cref{rholambda} in the third equality)
  \begin{displaymath}
    c_\lambda =Y_{2,1}=- \frac{\left(\lambda ^2-1\right) \sin\frac{\pi }{q}}{\lambda  \left(\csc\frac{\pi }{p}-2\right)} = \frac{  \sqrt{\rho(\rho -4 \sin \frac{\pi }{p}  \sin \frac{\pi }{q}) }}{2 \sin \frac{\pi }{p} -1}.
  \end{displaymath}
  To apply \cref{thm4} we can replace $c_\lambda$ with scalar multiple of it (since the only thing which matters is its direction), and so we set
  \begin{displaymath}
  \nu_\lambda =   \lambda^{-1} \left(\lambda ^2-1\right).
  \end{displaymath}
  If $\lambda$ is imaginary, then so too is $ \nu_\lambda$ and so $ i\nu_\lambda t \in \R $ for all $ t \in \R $. When $ \lambda = is $ for $ s > 0 $, then equality holding in \cref{ineq} implies that
  \begin{displaymath}
    s = \csc \frac{\pi }{p}  \csc \frac{\pi }{q}  + \sqrt{\csc ^2 \frac{\pi }{p}  \csc ^2 \frac{\pi }{q} -1}.
  \end{displaymath}
  Then \cref{rholambda} gives us
  \begin{displaymath}
    \rho_\lambda =-2 \sin \frac{\pi }{p}  \sin \frac{\pi }{q} +2 i \sqrt{1-\sin ^2 \frac{\pi }{p}  \sin ^2 \frac{\pi }{q} }.
  \end{displaymath}
  Hence by \cref{thm4}, the horizontal line $ \{ \rho_\lambda + i\nu_\lambda t : t \in \R \} $ lies in $ \overline{\mathcal{R}}_{p,q} $, as does the half-space which it defines that does not contain $ 0$.
\end{proof}

\section{Faithful representations of the braid group on three strands}

Motivated by the study of $ q$-rationals, Morier-Genoud, Ovsienko, and Veselov \autocite{mgov24} studied the locus of $ \mu \in \C^*$ such that the family of representations
\begin{displaymath}
  \tilde{\rho}_\mu : B_3 \xrightarrow{\rho} \GL(2, \Z[t^{\pm 1}]) \xrightarrow{t\,\mapsto\,\mu} \GL(2,\C)
\end{displaymath}
is faithful, where $ B_3 $ is the 3-strand braid group and $ \rho $ is the reduced Burau representation (defined below). They proved that these specialised Burau representations
are faithful for all $t\in \C^*$ outside the annulus $ 3 - 2\sqrt{2} \leq \abs{\mu} \leq 3 + 2\sqrt{2} $, and they conjectured \autocite[Conjecture 3]{mgov24} that this bound may be improved to the exterior of the annulus
\begin{displaymath}
  \frac{3-\sqrt{5}}{2} \leq \abs{\mu} \leq \frac{3 + \sqrt{5}}{2}.
\end{displaymath}

In this section, we will prove their conjecture and in fact give much tighter bounds on the faithfulness  (\cref{cor:full_bound}).

We first introduce the relevant objects from the theory of braid groups. Let $ B_3 $ denote the 3-strand braid group, and let $ \sigma_1, \sigma_2 $ be the standard Artin generators, so
\begin{displaymath}
  B_3 = \langle \sigma_1,\sigma_2 : \sigma_1 \sigma_2 \sigma_1 = \sigma_2 \sigma_1 \sigma_2 \rangle.
\end{displaymath}

The group $B_3$ admits a representation $ \rho : B_3 \to \GL(2,\Z[t^{\pm 1}]) $ given by
\begin{displaymath}
  \rho(\sigma_1) = \begin{pmatrix} -t & 1 \\ 0 & 1 \end{pmatrix}\;\text{and}\; \rho(\sigma_2) = \begin{pmatrix} 1 & 0 \\ t & -t \end{pmatrix}.
\end{displaymath}
This representation is called the \textit{reduced Burau representation}; by standard results in homological braid theory, it is faithful \autocite[\S 3.3]{kassel}.

For generic $ \mu \in \C^* $ the map $ B_3 \to \GL(2,\C) $ induced by substituting $t$ by $\mu$ is faithful. We write $ \pi_{\mu} : \GL(2,\Z[t^{\pm 1}]) \to \PSL(2,\C) $ for the projection of this map.

The image $ G_\mu $ of $ \pi_\mu \circ \rho $ is generated by
\begin{displaymath}
  A = \frac{1}{\sqrt{-\mu}} \begin{pmatrix} -\mu & 1 \\ 0 & 1 \end{pmatrix}\;\text{and}\;B = \frac{1}{\sqrt{-\mu}} \begin{pmatrix} 1 & 0 \\ \mu & -\mu \end{pmatrix}.
\end{displaymath}

In \autocite{mgov24} it is observed that the projectivisation of this representation is (after substitution of $ -q $ for $ t $) exactly the $q$-deformed
modular group $ \PSL(2,\Z)_q $ of \autocite{mgo20}.

To find a presentation of $ G_\mu $ for generic $\mu$ we only need to identify all the elements
of the image which are scalar multiples of the identity; these elements lie in the centre of $ \GL(2,\C) $
The centre $ Z(B_3) $ of $ B_3 $ is the cyclic group generated by $ (\sigma_1 \sigma_2 \sigma_1)^2 $, which is mapped by $ \rho $ to $ t^3 I_2 $.

Therefore a presentation of $G_\mu$ for generic $\mu$ is given by
\begin{displaymath}
  G_\mu = \langle A, B : ABA = BAB,\, (ABA)^2 = 1 \rangle.
\end{displaymath}
It will be convenient to change to the generators $R = ABA = BAB$, $S = AB$.

From the presentation, we see that $ G_\mu \simeq \PSL(2,\Z) $. When $\mu=-1$ we obtain the usual embedding of $\PSL(2,\Z)$ into $\PSL(2,\C)$. Recall that the quotient $ \Omega(\PSL(2,\Z))/\PSL(2,\Z) $ is a pair of
spheres, each with marked points of order $2$, $3$, and $\infty$; the hyperbolic 3-fold
is accordingly obtained from $S^2 \times (0,1)$ by drilling out two ideal cone arcs (one each of orders
$2$ and $3$, represented by $ R $ and $ S $ respectively) and one rank one cusp (represented by $RS = A^{-1}$). The generic 3-orbifold uniformised by a group
$G_\mu$ is obtained from $ B^3 $ by drilling out two ideal cone arcs, one of order $2$ and one of order $3$ (i.e.\ the rank 1 cusp of the $ \PSL(2,\Z) $ quotient expands to a deleted tube). This can be proved by exhibiting a conjugacy between $G_\mu$ and $\Gamma_\rho$. For this, observe that the conjugacy class of a group in this family is determined by the cross-ratio of the fixed points of the generators
(equality of cross-ratios implies existence of a conjugacy between the two groups). The fixed points of the generators $R$ and $S$ of $ G_\mu $ are
\begin{displaymath}
  \frac{1}{\sqrt{\mu}},\; -\frac{1}{\sqrt{\mu}},\; \frac{1}{2}+\frac{i \sqrt{3}}{2},\; \frac{1}{2}-\frac{i \sqrt{3}}{2}
\end{displaymath}
and the fixed points of the corresponding generators $A$ and $B$ of $ \Gamma_\rho $ are
\begin{displaymath}
  \infty,\; \frac{e^{\pi i/2}}{1-(e^{\pi i/2})^2},\; 0,\; \frac{(e^{\pi i/3})^2 - 1}{e^{\pi i/3} \rho}.
\end{displaymath}
Setting the respective cross ratios to be equal gives the equation
\begin{displaymath}
  1-\frac{2 \sqrt{3} \sqrt{\mu}}{i \mu+\sqrt{3} \sqrt{\mu}-i} = 1-\frac{2 \sqrt{3}}{\rho }
\end{displaymath}
and solving for $ \rho $ gives
\begin{equation}\label{eq:mu_to_rho}
  \rho =  i\sqrt{\mu} + \sqrt{3} - \frac{i}{\sqrt{\mu}}.
\end{equation}

This defines a map $\rho_\mu$ from the deformation space of groups $ G_\mu $ to the deformation space $ \mathcal{R}_{2,3} $. In \cref{fig:slice_in_mu_coords} we show a set of points which fill out
the set of $ \mu $ such that $ G_\mu \not\simeq \Z_2 * \Z_3 $: the complement of these points is the locus of discrete faithful representations.

\begin{remark}
  There are many discrete representations that are not faithful, i.e.\ that appear within the shaded region of \cref{fig:slice_in_mu_coords}. For instance, it is relatively easy to
  classify the discrete representations that lie on the unit circle in the figure. The unit circle in $\mu$-coordinates is the image of the real line in $ \rho$-coordinates, and it is a classic
  result of Knapp~\cite{Knapp} and Brenner~\cite{Brenner} that the discrete groups in the interval $ \mathbb{R} \setminus \mathcal{R}_{2,3} $ are exactly the triangle groups,
  so that $ \tr AB_\rho = 2\cos \theta $ for some $ \theta $ a submultiple of $ \pi $.
  By computing directly with the matrices for $ A $ and $ B_\rho $ we find $ \rho = \sqrt{3} - \tr AB_\rho $. Rearranging \eqref{eq:mu_to_rho} we obtain
  \begin{displaymath}
    0 = i\mu^2 + (\sqrt{3} - \rho) \mu - i = i\mu^2 + (\tr AB_\rho) \mu - i = i\mu^2 + (2\cos \theta) \mu - i
  \end{displaymath}
  Solving for $ \mu $ we obtain
  \begin{displaymath}
    \mu = \frac{-2\cos \theta \pm \sqrt{4\cos^2 \theta - 4}}{2i} = \frac{-\cos \theta \pm i\sin \theta}{i} = i\cos \theta \pm \sin \theta = i e^{\pm i\theta}.
  \end{displaymath}
  Thus the discrete representations of $ B_3 $ on the unit circle are obtained from the roots of unity by quarter-turns. The roots of unity themselves
  are of great interest in the theory of quantum representations, and in this particular context have been studied by Funar and Kohno~\cite{FunarKohno}.
\end{remark}

\begin{figure}\label{fig10}
\centering
\begin{subfigure}[c]{0.48\textwidth}
  \centering
  \includegraphics[height=5cm]{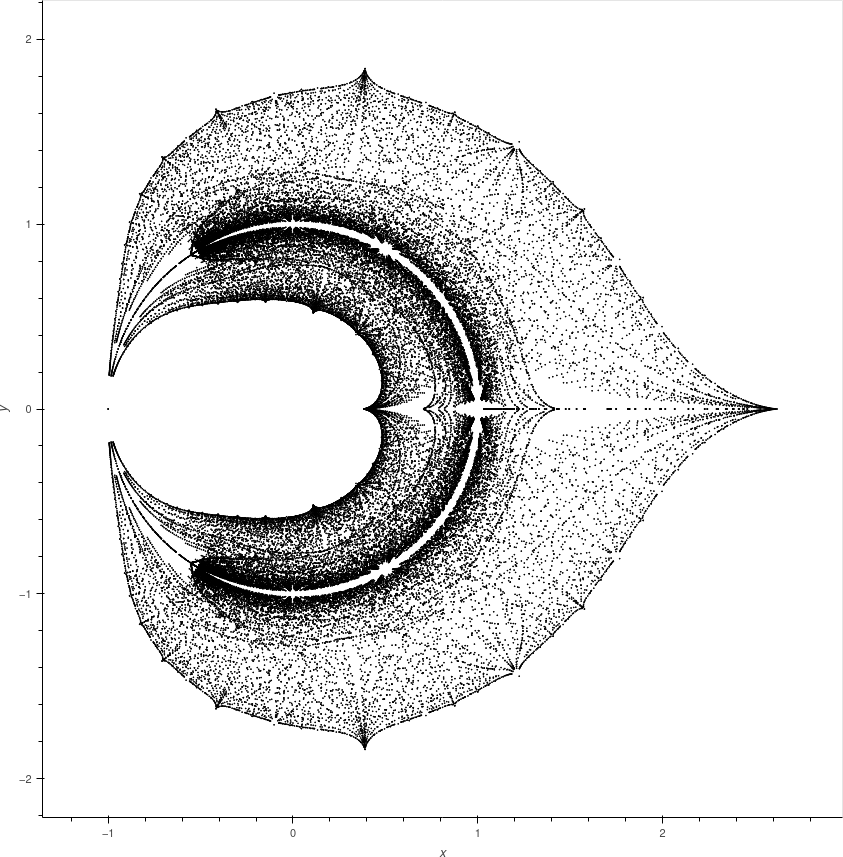}
  \caption{Points plotted approximate $ \IC \setminus \mathcal{R}_{2,3} $ in $\mu$-coordinates.\label{fig:slice_in_mu_coords}}
\end{subfigure}\hspace*{\fill}%
\begin{subfigure}[c]{0.48\textwidth}
  \centering
  \includegraphics[height=5cm]{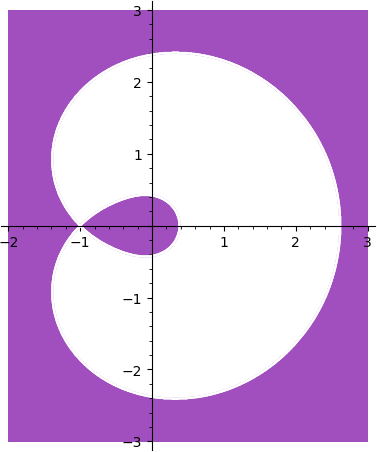}
  \caption{Shaded area consists of points satisfying the bound of \cref{cor:full_bound}.\label{fig:bound}}
\end{subfigure}
\caption{The deformation space $ \mathcal{R}_{2,3}$.}
\end{figure}

\begin{remark}\label{rem:dt}
  Dunfield and Tiozzo \autocite{dt} studied the roots of the Alexander polynomials of various closed 3-braids; these are also quantum invariants
  of knots related to the Burau representations but this time taking the classical braid closure and not the $2$-bridge closure,
  and so one would expect them to be related somehow to the Farey polynomials. We draw the readers attention to the intriguing similarity
  between \cref{fig:slice_in_mu_coords}, and Figure~1.1 of \autocite{dt}; and the similarity between \cref{fig1} above  and Figure~1.3 of \autocite{dt}.
  This suggests that the set of the roots of the Alexander polynomials of all closed 3-braids in the Dunfield--Tiozzo trace coordinates has similar
  properties as a subset of $ \IC $ to the union of the $(-2)$-levelsets of the Farey polynomials for $ \IZ_2 * \IZ_3 $ (this latter set is conjecturally dense
  in the set of $ \rho $ such that the group defined in \cref{XY} is indiscrete and is what is plotted in \cref{fig:slice_in_mu_coords}).
\end{remark}

\begin{theorem}\label{bound}
  For $ \mu \in \C^* $, let $ z=\sqrt{\mu}-1/\sqrt{\mu}$. If
  \begin{displaymath}
  3 \leq \abs{ z \pm \sqrt{z^2 + 3} }
  \end{displaymath}
  then $ G_\mu $ is isomorphic to $ B_3/Z(B_3) $ via the map $ \rho_\mu $.
\end{theorem}
\begin{proof}
  Consider the parameterisation of $ \Z_2 * \Z_3 $ representations studied in \cref{sec:ref}:
  \begin{displaymath}
    \left\langle U = \begin{pmatrix} \frac{1}{2} & -\frac{\sqrt{3}}{2} \\ \frac{\sqrt{3}}{2} & \frac{1}{2} \end{pmatrix},\;%
    V = \begin{pmatrix} 0 & -\lambda  \\ \frac{1}{\lambda} & 0 \end{pmatrix} \right\rangle.
  \end{displaymath}
  Using the cross ratio we may obtain the change of variables
  \begin{displaymath}
    \lambda = \frac{\sqrt{3} {\left(\mu \pm \sqrt{\mu^2 + \mu + 1} - 1\right)}}{3\sqrt{\mu}}.
  \end{displaymath}

  From \cref{ineq}, we see that if $ \abs{\lambda} > \sqrt{3} $, then $ \langle U, V \rangle $ is discrete and is isomorphic to $ \Z_2 * \Z_3 $. Substituting
  the change of variables for $ \lambda $ and using $z = \sqrt{\mu}-1/\sqrt{\mu}$, we obtain the desired inequality.
\end{proof}

In \cref{fig:bound}, we shade the set of $ \mu $ defined by the inequality of \cref{bound} .
This bound is sharp at the image of the $1/2 $ cusp of the Riley slice: one can show that this cusp is the point $ \rho = \sqrt{3} + i $;
its image under $ \mu$-coordinates is exactly the point $ (3+\sqrt{5})/2 $. It is also sharp at the $3/2$ cusp corresponding to $\mu = -1$.

\begin{lemma}\label{hopfian}
If $G_\mu\cong \PSL(2,\Z)$ then $\ker(\pi_\mu \circ \rho)=Z(B_3)$.
\end{lemma}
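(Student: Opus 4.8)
The plan is to deduce the lemma from the Hopfian property of $\PSL(2,\Z)$ applied to the central extension $1\to Z(B_3)\to B_3\to \PSL(2,\Z)\to 1$. First I would record, as already used in the paragraphs above, that $Z(B_3)$ is infinite cyclic, generated by $(\sigma_1\sigma_2\sigma_1)^2$, and that $\rho\big((\sigma_1\sigma_2\sigma_1)^2\big)=t^3I_2$; after the substitution $t\mapsto\mu$ this becomes the scalar $\mu^3I_2$, which is trivial in $\PSL(2,\C)$. Hence $Z(B_3)\subseteq\ker(\pi\circ\tilde\rho_\mu)$, so $\pi\circ\tilde\rho_\mu$ factors through the quotient $q\colon B_3\twoheadrightarrow B_3/Z(B_3)$ as $\pi\circ\tilde\rho_\mu=\bar\phi\circ q$ for a surjection $\bar\phi\colon B_3/Z(B_3)\twoheadrightarrow G_\mu$. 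Writing $R=\sigma_1\sigma_2\sigma_1$ and $S=\sigma_1\sigma_2$ (so that $R^2=S^3$ generates the centre), one has $B_3/Z(B_3)\cong\langle R,S:R^2=S^3=1\rangle\cong\Z_2*\Z_3\cong\PSL(2,\Z)$ — this is exactly the presentation written above for $G_\mu$ in the $R,S$ generators.

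By the first isomorphism theorem, $\ker(\pi\circ\tilde\rho_\mu)=q^{-1}(\ker\bar\phi)$, so it suffices to show that $\ker\bar\phi$ is trivial. Here is where the hypothesis enters: if $G_\mu\cong\PSL(2,\Z)$, then $\bar\phi$ is a surjective endomorphism of $\PSL(2,\Z)$. Since $\PSL(2,\Z)\cong\Z_2*\Z_3$ is finitely generated and residually finite, a classical theorem of Malcev shows that it is Hopfian, so $\bar\phi$ is an isomorphism; in particular $\ker\bar\phi=\{1\}$, and therefore $\ker(\pi\circ\tilde\rho_\mu)=q^{-1}(\{1\})=Z(B_3)$.

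The only non-formal input is the Hopficity of $\PSL(2,\Z)$, which is standard; everything else is bookkeeping with the extension. The point worth flagging is that $\bar\phi$ can genuinely fail to be injective for exceptional values of $\mu$ (for instance when $\mu$ is a suitable root of unity, a word in $A$ and $B$ can collapse), so the hypothesis $G_\mu\cong\PSL(2,\Z)$ is essential and not cosmetic; under it, the Hopfian property forces the kernel down to the centre. I do not anticipate any real obstacle, the only care needed being the explicit identification $B_3/Z(B_3)\cong\PSL(2,\Z)$, which is already implicit in the paper's passage to the $R,S$ generators.
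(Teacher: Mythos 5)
Your proof is correct and follows essentially the same route as the paper's: both arguments reduce to the observation that $\pi\circ\tilde\rho_\mu$ factors through $B_3/Z(B_3)\cong\PSL(2,\Z)$ and then invoke the Hopfian property of $\PSL(2,\Z)$ (finitely generated plus residually finite, via Malcev) to force the induced surjection onto $G_\mu\cong\PSL(2,\Z)$ to be an isomorphism. Your version is marginally more careful in that it explicitly verifies the containment $Z(B_3)\subseteq\ker(\pi\circ\tilde\rho_\mu)$ from $\rho\bigl((\sigma_1\sigma_2\sigma_1)^2\bigr)=t^3I_2$, which the paper takes as already established.
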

\begin{proof}
Let $K = \ker(\pi_\mu \circ \rho)$ then $G_\mu=\mathrm{Im}(\pi_\mu \circ \rho)=B_3/K$. Let $Z = Z(B_3)$.
Since $Z \triangleleft K \triangleleft B_3$ we obtain that  $(B_3/Z)/(K/Z) \cong B_3/K = G_\mu$ by the third isomorphism theorem. Since $B_3/Z\cong \PSL(2,\Z)$ and since by assumption $G_\mu \cong \PSL(2,\Z)$ we obtain that $K=Z$ as desired since $\PSL(2,\Z)$ is finitely generated and residually finite \autocite{malcev} hence has no proper quotients isomorphic to itself \autocite[Theorem~IV.4.10]{ls}.
\end{proof}

\begin{corollary}\label{cor:full_bound}
  If $ z = \sqrt{\mu} -1/\sqrt{\mu}$ satisfies the bound in \cref{bound} and $\mu\neq 0,-1$, then $ \tilde{\rho}_\mu $ is faithful.
\end{corollary}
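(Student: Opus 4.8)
The plan is to combine \cref{bound} with \cref{hopfian} in a direct, short argument. First I would invoke \cref{bound}: under the hypothesis that $z = \sqrt{\mu}-1/\sqrt{\mu}$ satisfies \cref{eq:bound}, the group $G_\mu$ is isomorphic to $B_3/Z(B_3)$ via the map $\rho_\mu$. In particular $G_\mu \cong \PSL(2,\Z)$, so the hypothesis of \cref{hopfian} is met, and we conclude that $\ker(\pi\circ\tilde\rho_\mu) = Z(B_3)$.

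Next I would translate this kernel computation into the statement about faithfulness of $\tilde\rho_\mu$ itself (as opposed to its projectivisation $\pi\circ\tilde\rho_\mu$). The point is that $\tilde\rho_\mu$ lands in $\SL(2,\C)$, and the scalar matrices in the image are exactly $\pm I$; the preimage of the scalars is $Z(B_3)$, which is infinite cyclic generated by $(\sigma_1\sigma_2\sigma_1)^2$, mapping to $\mu^3 I$ under $\rho$ and hence to $(\pm\mu^{3/2})$-scalars after the substitution and the square-root normalisation. So $\ker(\tilde\rho_\mu) \subseteq Z(B_3)$, and $\tilde\rho_\mu$ restricted to the infinite cyclic group $Z(B_3)$ is injective precisely when the relevant scalar has infinite order, i.e. when $\mu$ is not a root of unity of the appropriate kind — and more crudely, it fails to be injective on $Z(B_3)$ only at finitely many exceptional $\mu$, with $\mu=-1$ being the genuinely relevant one on the locus \cref{eq:bound} (there $\mu^{3/2}$ is a root of unity, giving $R^2 = (ABA)^2 = I$ exactly, so $\tilde\rho_\mu$ kills part of $Z(B_3)$). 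Excluding $\mu = -1$ by hypothesis, the restriction of $\tilde\rho_\mu$ to $Z(B_3)$ is injective, and combined with $\ker(\tilde\rho_\mu)\subseteq Z(B_3)$ this gives $\ker(\tilde\rho_\mu) = 1$, i.e. $\tilde\rho_\mu$ is faithful.

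The main obstacle I anticipate is the bookkeeping in the last step: carefully arguing that $\ker(\tilde\rho_\mu)$ is contained in $Z(B_3)$ rather than merely mapping into the centre of $\PSL(2,\C)$, and then pinning down exactly which $\mu$ on the locus \cref{eq:bound} make the scalar $\pm\mu^{3/2}$ torsion. One must check that $\mu=-1$ is the only such point with $z$ satisfying \cref{eq:bound} (equivalently, that no other root of unity $\mu$ lies in the unbounded component of that locus) — this is where the geometry of the bound does real work, since the locus is a neighbourhood of $\infty$ and the only root of unity it can contain with small modulus is $-1$, the $3/2$-cusp noted after \cref{bound}. Everything else is a formal chase through the third isomorphism theorem and the Hopfian property already packaged in \cref{hopfian}.

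\begin{proof}
  By \cref{bound}, the hypotheses imply $G_\mu \cong B_3/Z(B_3) \cong \PSL(2,\Z)$, so \cref{hopfian} applies and gives $\ker(\pi\circ\tilde\rho_\mu) = Z(B_3)$. Thus any element of $\ker(\tilde\rho_\mu)$ lies in $Z(B_3)$, which is generated by $(\sigma_1\sigma_2\sigma_1)^2$. Under $\tilde\rho_\mu$ this generator maps to the scalar matrix $\pm\mu^{3/2}\,I$; for $\mu \neq -1$ satisfying \cref{eq:bound} this scalar is not a root of unity (the only root of unity $\mu$ in the unbounded component of the locus \cref{eq:bound} is $\mu=-1$, the $3/2$-cusp), so $\tilde\rho_\mu$ is injective on $Z(B_3)$. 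Combining $\ker(\tilde\rho_\mu)\subseteq Z(B_3)$ with injectivity on $Z(B_3)$ gives $\ker(\tilde\rho_\mu) = 1$.
\end{proof}
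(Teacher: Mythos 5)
Your overall route is the same as the paper's: \cref{bound} plus \cref{hopfian} force $\ker\tilde\rho_\mu\subseteq Z(B_3)$, and one then analyses the image of the centre. The one substantive, non-formal step you leave unproved is exactly the step the paper has to compute: that no $\mu$ of unit modulus other than $-1$ satisfies \cref{eq:bound}. Your parenthetical justification --- that the locus is a neighbourhood of $\infty$ and so the only root of unity it can contain is $-1$ --- does not establish this: a neighbourhood of infinity can perfectly well meet the unit circle, and whether the locus of \cref{eq:bound} does so away from $\mu=-1$ is precisely the question. (Also, the hypothesis of the corollary is the inequality \cref{eq:bound} itself, not membership in its unbounded component, so restricting to that component is not licensed.) The paper closes this by a direct computation: for $\mu=e^{2i\theta}$ one has $z=2i\sin\theta$ and $z^2+3=3-4\sin^2\theta$, so the triangle inequality turns \cref{eq:bound} into $3\leq 2\abs{\sin\theta}+\sqrt{\abs{3-4\sin^2\theta}}$, which forces $\sin\theta=\pm 1$ and hence $\mu=-1$. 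Without some such verification your proof is incomplete at its only genuinely analytic point.

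A secondary issue: under the paper's $\tilde\rho_\mu$ (the unnormalised substitution $t\mapsto\mu$ into $\SL(2,\C)$), the generator $(\sigma_1\sigma_2\sigma_1)^2$ of $Z(B_3)$ maps to $\mu^3 I$, not to $\pm\mu^{3/2}I$. The latter value suggests you are working with the unit-determinant generators $A,B$ carrying the $1/\sqrt{-\mu}$ factors; but for that normalisation $(ABA)^2=-I$ identically in $\mu$, so that lift is never injective on the centre and the corollary would be false for it. The argument only works for the unnormalised representation, where injectivity on $Z(B_3)$ amounts to $\mu^{3k}\neq 1$ for $k\neq 0$, i.e.\ to $\mu$ not being a root of unity; with that correction your reduction of faithfulness to ``the scalar has infinite order'' is sound.
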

\begin{proof}
  Let $ \tilde{\rho}_\mu : B_3 \to \GL(2,\C) $ be the specialised Burau representation. Suppose $ \mathfrak{a} \in \ker \tilde{\rho}_\mu $. Composition with the projection map yields $ \mathfrak{a}\in \ker(\pi_\mu \circ \rho)$ thus by \cref{hopfian} we must have $ \mathfrak{a} \in Z(B_3) $. Hence $ \mathfrak{a} = (\sigma_1 \sigma_2 \sigma_1)^{2k} $ for some $ k $. But we can explicitly compute,
  \begin{displaymath}
    \tilde{\rho}_{\mu}  (\sigma_1 \sigma_2 \sigma_1)^{2k} = \mu^{3k} \mathrm{Id},
  \end{displaymath}
  hence $ \mu^{3k} = 1 $ and either $ \abs{\mu} = 1 $ or $ k = 0 $. So let $\mu=e^{i2\theta}$ and suppose that $z = 2i\sin(\theta)$ satisfies \cref{bound}. By the triangle inequality for absolute values we obtain $3\leq \abs{2\sin\theta}+\sqrt{\abs{3-4\sin^2(\theta)}}$ hence $\sin \theta = \pm 1$ hence $\mu = -1$.
\end{proof}

\printbibliography

\end{document}